\DeclareSymbolFontAlphabet{\mathbb}{AMSb}
\DeclareSymbolFontAlphabet{\mathbbl}{bbold}
\def\@endtheorem{\endtrivlist}% NEW
\theoremstyle{plain}
\newtheorem*{theorem*}{\protect\TheoremName}
\newtheorem*{acknowledgement*}{\protect\AcknowledgmentName}
\newtheorem*{algorithm*}{\protect\AlgorithmName}
\newtheorem*{assumption*}{\protect\AssumptionName}
\newtheorem*{assumptions*}{\protect\AssumptionsName}
\newtheorem*{axiom*}{\protect\AxiomName}
\newtheorem*{condition*}{\protect\ConditionName}
\newtheorem*{conditions*}{\protect\ConditionsName}
\newtheorem*{case*}{\protect\CaseName}
\newtheorem*{claim*}{\protect\ClaimName}
\newtheorem*{conclusion*}{\protect\ConclusionName}
\newtheorem*{conjecture*}{\protect\ConjectureName}
\newtheorem*{corollary*}{\protect\CorollaryName}
\newtheorem*{criterion*}{\protect\CriterionName}
\newtheorem*{definition*}{\protect\DefinitionName}
\newtheorem*{lemma*}{\protect\LemmaName}
\newtheorem*{notation*}{\protect\NotationName}
\newtheorem*{problem*}{\protect\ProblemName}
\newtheorem*{proposition*}{\protect\PropositionName}
\newtheorem*{solution*}{\protect\SolutionName}
\newtheorem*{summary*}{\protect\SummaryName}
\newtheorem{theorem}{\protect\TheoremName}[section]
\newtheorem{assumptions}[theorem]{\protect\AssumptionsName}
\newtheorem{corollary}[theorem]{\protect\CorollaryName}
\newtheorem{definition}[theorem]{\protect\DefinitionName}
\newtheorem{lemma}[theorem]{\protect\LemmaName}
\newtheorem{proposition}[theorem]{\protect\PropositionName}
\theoremstyle{definition}
\newtheorem*{example*}{\protect\ExampleName}
\newtheorem*{exercise*}{\protect\ExerciseName}
\newtheorem*{remark*}{\protect\RemarkName}
\newtheorem{remark}[theorem]{\protect\RemarkName}
\newcommand{\TheoremName}{}
\newcommand{\AcknowledgmentName}{}
\newcommand{\AlgorithmName}{}
\newcommand{\AssumptionName}{}
\newcommand{\AssumptionsName}{}
\newcommand{\AxiomName}{}
\newcommand{\ConditionName}{}
\newcommand{\ConditionsName}{}
\newcommand{\CaseName}{}
\newcommand{\ClaimName}{}
\newcommand{\ConjectureName}{}
\newcommand{\ConclusionName}{}
\newcommand{\CorollaryName}{}
\newcommand{\CriterionName}{}
\newcommand{\DefinitionName}{}
\newcommand{\LemmaName}{}
\newcommand{\NotationName}{}
\newcommand{\ProblemName}{}
\newcommand{\PropositionName}{}
\newcommand{\SolutionName}{}
\newcommand{\SummaryName}{}
\newcommand{\ExampleName}{}
\newcommand{\RemarkName}{}
\newcommand{\ExerciseName}{}
  \renewcommand{\TheoremName}{Theorem}
  \renewcommand{\AcknowledgmentName}{Acknowledgments}
  \renewcommand{\AlgorithmName}{Algorithm}
  \renewcommand{\AssumptionName}{Assumption}
  \renewcommand{\AssumptionsName}{Assumptions}
  \renewcommand{\AxiomName}{Axiom}
  \renewcommand{\ConditionName}{Condition}
  \renewcommand{\ConditionsName}{Conditions}
  \renewcommand{\CaseName}{Case}
  \renewcommand{\ClaimName}{Claim}
  \renewcommand{\ConjectureName}{Conjecture}
  \renewcommand{\ConclusionName}{Conclusion}
  \renewcommand{\CorollaryName}{Corollary}
  \renewcommand{\CriterionName}{Criterion}
  \renewcommand{\DefinitionName}{Definition}
  \renewcommand{\LemmaName}{Lemma}
  \renewcommand{\NotationName}{Notation}
  \renewcommand{\ProblemName}{Problem}
  \renewcommand{\PropositionName}{Proposition}
  \renewcommand{\SolutionName}{Solution}
  \renewcommand{\SummaryName}{Summary}
  \renewcommand{\ExampleName}{Example}%
  \renewcommand{\ExerciseName}{Exercise}%
  \renewcommand{\RemarkName}{Remark}%
  \renewcommand{\TheoremName}{Theorem}
  \renewcommand{\AcknowledgmentName}{Anerkennungen}
  \renewcommand{\AlgorithmName}{Algorithmus}
  \renewcommand{\AssumptionName}{Voraussetzung}
  \renewcommand{\AssumptionsName}{Voraussetzungen}
  \renewcommand{\AxiomName}{Axiom}
  \renewcommand{\ConditionName}{Bedingung}
  \renewcommand{\ConditionsName}{Bedingungen}
  \renewcommand{\CaseName}{Fall}
  \renewcommand{\ClaimName}{Behauptung}
  \renewcommand{\ConjectureName}{Vermutung}
  \renewcommand{\ConclusionName}{Folgerung}
  \renewcommand{\CorollaryName}{Korollar}
  \renewcommand{\CriterionName}{Kriterium}
  \renewcommand{\DefinitionName}{Definition}
  \renewcommand{\LemmaName}{Lemma}
  \renewcommand{\NotationName}{Notation}
  \renewcommand{\ProblemName}{Problem}
  \renewcommand{\PropositionName}{Satz}
  \renewcommand{\SolutionName}{L\"osung}
  \renewcommand{\SummaryName}{Zusammenfassung}
  \renewcommand{\ExampleName}{Beispiel}%
  \renewcommand{\ExerciseName}{\"Ubung}%
  \renewcommand{\RemarkName}{Bemerkung}%
  \renewcommand{\TheoremName}{Je\'wrhma}
  \renewcommand{\AcknowledgmentName}{Euqarist\'ies}
  \renewcommand{\AlgorithmName}{Alg\'orijmos}
  \renewcommand{\AssumptionName}{Up\'ojesh}
  \renewcommand{\AssumptionsName}{Upoj\'eseis}
  \renewcommand{\AxiomName}{Ax\'iwma}
  \renewcommand{\ConditionName}{Pro\"up\'ojesh}
  \renewcommand{\ConditionsName}{Pro\"upoj\'eseis}
  \renewcommand{\CaseName}{Per\'iptwsh}
  \renewcommand{\ClaimName}{Isqurism\'os}
  \renewcommand{\ConjectureName}{Eikas\'ia}
  \renewcommand{\ConclusionName}{Sump\'erasma}
  \renewcommand{\CorollaryName}{P\'orisma}
  \renewcommand{\CriterionName}{Krit\'hrio}
  \renewcommand{\DefinitionName}{Orism\'os}
  \renewcommand{\LemmaName}{L\'hmma}
  \renewcommand{\NotationName}{Shmeiograf\'ia}
  \renewcommand{\ProblemName}{Pr\'oblhma}
  \renewcommand{\PropositionName}{Pr\'otash}
  \renewcommand{\SolutionName}{L\'ush}
  \renewcommand{\SummaryName}{Per\'ilhyh}
  \renewcommand{\ExampleName}{Par\'adeigma}%
  \renewcommand{\ExerciseName}{\'Askhsh}%
  \renewcommand{\RemarkName}{Parat\'hrhsh}%
\renewenvironment{proof}[1][\proofname]{\par
\pushQED{\hfill$\blacksquare$}%
\normalfont \topsep6\p@\@plus6\p@\relax
\trivlist
\item\relax
{\bfseries
#1\@addpunct{.}}\hspace\labelsep\ignorespaces
}{%
\popQED\endtrivlist\@endpefalse
}
\newcommand{\strongly}{%
  \mathrel{
    \resizebox{1.4em}{0.88ex}{$\rightarrow$}
}}
\newcommand{\weakly}{%
  \mathrel{
    \resizebox{1.4em}{0.88ex}{$\rightharpoonup$}
}}
\newcommand{\wstar}{%
  \mathrel{\vbox{\offinterlineskip\ialign{%
    \hfil##\hfil\cr
    $\scriptstyle*\,$\cr
    \noalign{\kern 0.12ex}
    \resizebox{1.4em}{0.88ex}{$\rightharpoonup$}\cr
}}}}
\def\XXint#1#2#3{{\setbox0=\hbox{$#1{#2#3}{\int}$ }\hspace{0.17em}
\vcenter{\hbox{$#2#3$ }}\kern-.585\wd0}}
\DeclareMathOperator*{\argmin}{arg\,min}
\DeclareMathOperator*{\clconv}{\overline{conv}}
\DeclareMathOperator{\nequiv}{\not\equiv}
\DeclareMathOperator{\diver}{div}
\DeclareMathOperator{\dist}{dist}
\newcommand{\lnorm}{\left\|}
\newcommand{\rnorm}{\right\|}
\newcommand{\defeq}{\mathrel{\hspace{-0.1ex}\mathrel{\rlap{\raisebox{0.3ex}{$\m@th\cdot$}}\raisebox{-0.3ex}{$\m@th\cdot$}}\hspace{-0.73ex}\resizebox{0.55em}{0.84ex}{$=$}\hspace{0.67ex}}\hspace{-0.25em}}
\newcommand{\eqdef}{\hspace{-0.25em}\mathrel{\hspace{0.67ex}\resizebox{0.55em}{0.84ex}{$=$}\hspace{-0.73ex}\mathrel{\rlap{\raisebox{0.3ex}{$\m@th\cdot$}}\raisebox{-0.3ex}{$\m@th\cdot$}}\hspace{-0.1ex}}}
\begin{document}
\selectlanguage{english}

\title{A free boundary approach to the Rosensweig instability of ferrofluids}
\author{E. Parini\thanks{Aix Marseille Univ, CNRS, Centrale Marseille, I2M, 39 Rue Frederic Joliot Curie, 13453 Marseille, France}
\ and\ A. Stylianou\thanks{Institut f\"{u}r Mathematik, Universit\"{a}t Kassel, 34132 Kassel, Germany}
}
\maketitle

\begin{abstract}
We establish the existence of saddle points for a free boundary problem describing the two-dimensional free surface of a ferrofluid which undergoes normal field instability (also known as Rosensweig instability). The starting point consists in the ferro-hydrostatic equations for the magnetic potentials in the ferrofluid and air, and the function describing their interface. The former constitute the strong form for the Euler-Lagrange equations of a convex-concave functional. We extend this functional in order to include interfaces that are not necessarily graphs of functions. Saddle points are then found by iterating the direct method of the calculus of variations and by applying classical results of convex analysis. For the existence part we assume a general (arbitrary) non linear magnetization law. We also treat the case of a linear law: we show, via convex duality arguments, that the saddle point is a constrained minimizer of the relevant energy functional of the physical problem.
\end{abstract}

\footnotetext[1]{\textbf{Keywords:} ferrofluids; free boundary problem; convex-concave functional}
\footnotetext[2]{\textbf{2010 AMS Subject Classification:} 35R35; 49J35; 35Q61; 35Q35}

\section{Introduction}
\subsection{The ferro-hydrostatic equations}
Let $\Omega\subset \mathbb R^2$ be open, connected and bounded with a Lipschitz continuous boundary, and $b,\tau>0$. Moreover, let $\mu\in C_b^1(\mathbb R)$, and for a function $\eta:\mathbb R^2\strongly\mathbb R$ define the sets
\begin{equation}\label{boundaries}
\begin{aligned}
 D_\eta^+ {}& \defeq \left\{(x,y,z)\in\mathbb R^3: (x,y)\in\Omega\ \text{ and }\ z\in\left(\eta(x,y),1\rule{0pt}{10pt}\right)\right\},\\
 D_\eta^- {}& \defeq \left\{(x,y,z)\in\mathbb R^3: (x,y)\in\Omega\ \text{ and }\ z\in\left(-1,\eta(x,y)\rule{0pt}{10pt}\right)\right\},\\
 \partial_{cyl} D {}& \defeq \left\{(x,y,z)\in\mathbb R^3: (x,y)\in\partial\Omega\ \text{ and }\ z\in\left(-1,1\right)\right\}\\
 \partial_{top} D {}& \defeq \left\{(x,y,z)\in\mathbb R^3: (x,y)\in\Omega\ \text{ and }\ z=1\right\},\\
 \partial_{bot} D {}& \defeq \left\{(x,y,z)\in\mathbb R^3: (x,y)\in\Omega\ \text{ and }\ z=-1\right\},
\end{aligned}
\end{equation}
\begin{figure}[t]
 \centering\includegraphics[scale=.7]{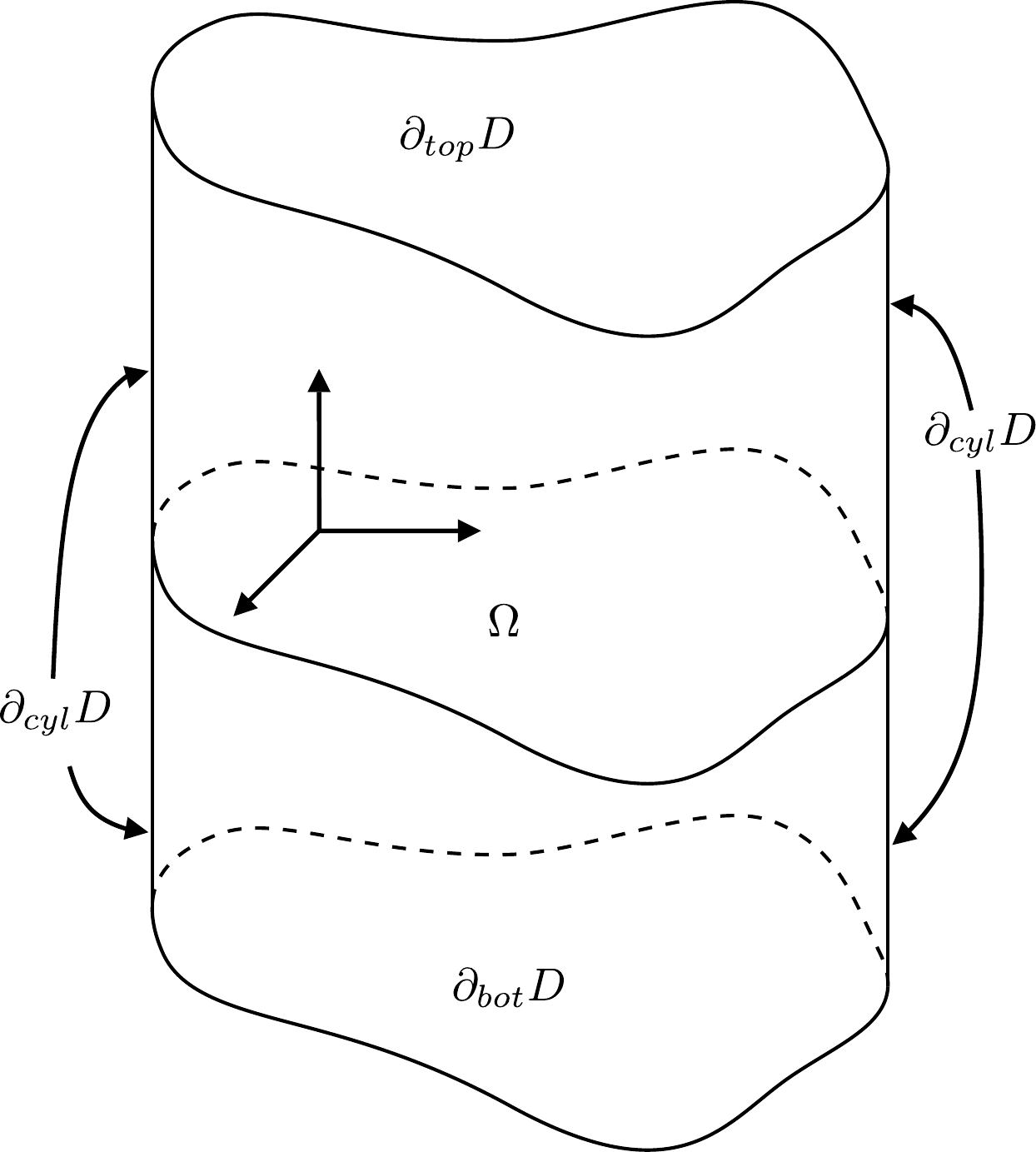}
 \caption{The cylindrical domain in which the ferrofluid is contained. The domain $\Omega$ lies at $z=0$ and corresponds to the undisturbed surface of the ferrofluid before the magnet is turned on.}
\end{figure}
and the function
\begin{equation}\label{primitive}
 M(s)\defeq\int_0^s t\cdot\mu(t)\;dt.
\end{equation}
Consider the following problem: Find sufficiently smooth functions $\phi,\psi:\mathbb R^3\strongly\mathbb R$ and $\eta:\mathbb R^2\strongly\mathbb R$ with $|\eta|<1$ that satisfy the boundary value problem
\begin{equation}\label{non_hom_ferroPDE}
 \begin{aligned}
  \displaystyle\Delta \psi {}& =  0 & \text{ in }{}& D_\eta^+,\\
  \displaystyle\diver\left(\mu\left(|\nabla\phi|\right)\nabla\phi\right) {}&= 0 & \text{ in } {}&D_\eta^-,\\
  \displaystyle\psi_z{}& = \mu(1) & \text{ on } {}&\partial_{top}D,\\
  \displaystyle\mu\left(|\nabla\phi|\right)\phi_z {}&= \mu(1) & \text{ on }{}& \partial_{bot} D,\\
  \displaystyle\psi {}& =  0 & \text{ on }{}& \partial_{cyl}D,\\
  \displaystyle\phi {}& =  0 & \text{ on }{}& \partial_{cyl}D,\\
 \end{aligned}
\end{equation}
together with the compatibility conditions
\begin{equation}\label{compatibility_conditions}
 \begin{aligned}
  \phi{}&=\psi{}& \text{ on } {}&\Omega\times \{z=\eta(x,y)\}\\
  \mu\left(|\nabla\phi|\right)\phi_{\mathbf n}{}&=\psi_{\mathbf n}{}& \text{ on } {}&\Omega\times \{z=\eta(x,y)\},
 \end{aligned}
\end{equation}
and the free surface equation
\begin{equation}\label{free_surface_equation}
\begin{aligned}
  M\left(|\nabla\phi|\right)-\frac{1}{2}|\nabla\psi|^2+\sqrt{1+|\nabla\eta|^2}\left(\rule{0pt}{10pt}\psi_z\psi_{\mathbf n}-\mu(|\nabla\phi|)\phi_z\phi_{\mathbf n}\right)\\
  +\tau\diver \frac{\nabla \eta}{\sqrt{1+|\nabla\eta|^2}}-b\eta-p_0{}&=0,
\end{aligned}
\end{equation}
on $\Omega\times \{z=\eta(x,y)\}$. Here,
\[
 \mathbf n\defeq\frac{1}{\sqrt{1+|\nabla\eta|^2}}(-\eta_x,-\eta_y,1),
\]
is the unit normal of the surface $\Omega\times\{z=\eta(x,y)\}$ in the direction of positive $z$, and the constant $p_0\defeq M(1)+\mu(1)\left(\frac{1}{2}\mu(1)-1\right)$.

Finally we would like to point out the following notational convention: the dimension of all operators appearing in the paper conform to the dimension of the domain of definition of their arguments, that is,
$$\nabla \phi=(\phi_x,\phi_y,\phi_z),\ \diver \eta=\eta_x+\eta_y,\ |\nabla\phi|=(\phi_x^2+\phi_y^2+\phi_z^2)^{1/2},\ |\nabla\eta|=(\eta_x^2+\eta_y^2)^{1/2}, \text{ etc.}$$

\subsection{Physical attributes and modelling of the normal field instability of a ferrofluid}
The system of partial differential equations described in the previous subsection arises as the mathematical model of an incompressible ferrofluid undergoing the so-called \textit{normal field instability} or \textit{Rosensweig instability} (refer for example to Rosensweig's monograph \cite{Rosensweig1985}): In an experiment, a vertical magnetic field is applied to a static ferrofluid layer, and various patterns (typically regular cellular hexagons) emerge on the fluid surface as the field strength is increased through a critical value.

Note that the strength of the applied field does not seem to appear in the system; it is rescaled to $1$. The function $\eta$ defines the (rescaled) interface between the ferrofluid and air (or another fluid conforming to a linear magnetization law), that is, they occupy the regions $D_\eta^-$ and $D_\eta^+$ respectively and are subjected to a parallel vertical magnetic field; for more details on extracting the ferrofluid system from Maxwell's equations and the appropriate rescaling of the initial physical laws see \cite{GrovesLloydEtAl2017} and references therein. The real functions $M$ and $\mu$ describe the magnetization law for the ferrofluid and the unknown functions $\phi,\psi$ are the magnetic potentials in the ferrofluid and air respectively. A typical case consists in ferrofluids following a nonlinear Langevin law (see \cite{GollwitzerLloydEtAl2015,RichterLange2009}), that is,
 \begin{equation}
  \label{mu}\mu(s)=\left\{ 
  \begin{aligned}
   & 1+\frac{M_s}{s}\left(\coth(\gamma s)-\frac{1}{\gamma s}\right),& \text{for } s\neq0,\\
   & 1 + \frac{M_s \gamma}{3},& \text{for } s=0,
  \end{aligned}
  \right.
 \end{equation}
where $M_s$ is  the saturation magnetization and $\gamma$ is the Langevin parameter (they are both positive constants). Lastly, the parameters $b$ and $\tau$ are respectively the rescaled gravity acceleration and the coefficient of surface tension for the ferrofluid.

Since the invention of the ferrofluids (\cite{Papell1965}) there has been a number of works studying surface instabilities using formal analysis (see \cite{GrovesLloydEtAl2017} and references therein). A first rigorous treatment of regular patterns assuming a linear magnetization law was given by Twombly and Thomas \cite{TwomblyThomas1983}. For small amplitude localized patterns and nonlinear laws, Groves \textit{et al} produced a rigorous theory in \cite{GrovesLloydEtAl2017}, using a technique known as \textit{Kirchg\"assner reduction}.

This work lies between the latter two papers in the following sense: we allow for general nonlinear laws and simultaneously pose no assumption on the smallness of solutions. This is achieved through the study of the problem as a free boundary problem, that is, the originally unknown function $\eta$ that models the free interface of the ferrofluid is replaced by the characteristic function of the set occupied by the ferrofluid. The new set of unknowns (the magnetic potentials of the two fluids and the characteristic function of the ferrofluid) is then found as a critical point of an appropriate functional.

\subsection{The variational structure of the ferrofluid system}
The equations \eqref{non_hom_ferroPDE}-\eqref{free_surface_equation} describing the ferrofluid system have a variational structure: they correspond to the strong Euler-Lagrange equations of the functional
\begin{equation}
 \label{functional}
 \begin{aligned}
  F(u,\eta)  = {}& \int_\Omega\left( \int_{-1}^{\eta(x,y)}M(|\nabla u|)\;dz+ \int_{\eta(x,y)}^1\frac{1}{2}|\nabla u|^2\;dz\right)dxdy\\[0.5ex]
  {}&+\mu(1)\int_\Omega \left(u|_{z=-1}-u|_{z=1}\right)dxdy\\[0.5ex]
  {}&-\int_\Omega\left(\frac{b}{2}\eta^2+p_0\eta\right)dxdy-\tau\int_\Omega \sqrt{1+|\nabla\eta|^2}\,dxdy.
 \end{aligned}
\end{equation}
This means that, assuming that $( u ,\eta)$ is a critical point of $F$ and allowing
\[
 \psi= u |_{D_\eta^+}\in W^{2,2}\left(D_\eta^+\right)\ \text{ and }\ \phi= u |_{D_\eta^-}\in W^{2,2}\left(D_\eta^-\right),
\]
and for some smoothness of the interface, for example $\eta\in C^1$ and $|\eta|<1$, one obtains a strong solution to the ferrofluid system of equations. 

Note that $F$ is not a ``pure'' energy functional: it is convex with respect to $u$ and ``almost'' concave with respect to $\eta$ (the variable integral is not affine but is still bounded). This implies that it does not possess minimizers: taking a highly oscillating interface will produce ``energies'' that tend to $-\infty$ as the oscillations increase.

\begin{remark}
 The physical parameters $b$ and $\tau$ are of the order of $H^{-2}$, where $H$ denotes the strength of the applied field (see \cite{GrovesLloydEtAl2017}). Thus, dividing \eqref{functional} by $-H^2$, letting $H \strongly 0$, and assuming incompressibility of the fluid, we are led to the problem of minimizing the functional
 $$ \eta\mapsto \frac{\tilde b}{2}\int_\Omega\eta^2\;dxdy+\tilde \tau\int_\Omega \sqrt{1+|\nabla\eta|^2}\;dxdy.$$
 Due to the absence of side conditions, the minimizer is the function $\eta=0$. This justifies physical intuition that the surface of the ferrofluid remains flat in the absence of external field.
\end{remark}

The next observation concerns the mathematical properties of the nonlinear magnetization law. The following lemma can be proven with elementary analytical arguments.
\begin{lemma}\label{mulemma}
 Let $\mu,M:\mathbb R\strongly \mathbb R$ be defined by \eqref{primitive} and \eqref{mu}.
\begin{enumerate}
 \item $\mu$ is an even function.
 \item $\mu\in C(\mathbb R)$ and $1<\mu(s)\leq 1+\frac{M_s \gamma}{3}$ with equality if and only if $s=0$.
 \item For all $ s\geq0$ holds that
 \[\frac{1}{2}s^2\leq M(s)\leq \frac{1}{2}\left(\frac{\gamma}{3}M_s+1\right)s^2,\]
 with equality if and only if $s=0$.
 \item $M$ is a convex function.
\end{enumerate}
\end{lemma}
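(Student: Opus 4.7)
The plan is to reduce the lemma to standard properties of the Langevin function $L(x)\defeq\coth(x)-1/x$ (extended by $L(0)\defeq 0$), in terms of which
\[
\mu(s) \;=\; 1 + \frac{M_s\,L(\gamma s)}{s}\quad (s\neq 0), \qquad s\mu(s) \;=\; s + M_s L(\gamma s)
\]
for every $s\in\mathbb R$. I would first establish three facts about $L$, from which all four assertions follow: (i) oddness of $L$ together with the expansion $L(x)/x\to 1/3$ as $x\to 0$, read off from $\coth x = 1/x + x/3 + O(x^3)$; (ii) strict monotonicity on $(0,\infty)$, since $L'(x) = x^{-2}-\csch^2 x > 0$ is equivalent to the elementary $\sinh x > x$; and (iii) the sharp global bound $L(x)\leq x/3$ for $x\geq 0$, with equality only at $x=0$.

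The only mildly non-trivial step is (iii). I would clear denominators and reformulate the inequality as $f(x)\defeq (x^2+3)\sinh x - 3x\cosh x \geq 0$ on $[0,\infty)$. A direct differentiation yields $f'(x) = x\bigl(x\cosh x - \sinh x\bigr)$; writing $g(x)\defeq x\cosh x - \sinh x$, one computes $g'(x) = x\sinh x > 0$ on $(0,\infty)$. Since $f(0)=g(0)=0$, two successive integrations deliver $f>0$, and hence $L(x)<x/3$, on $(0,\infty)$. This is the step I expect to be the main obstacle, in the sense that it is the only one that is not immediate from the definitions or from calculus-of-one-variable boilerplate.

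With (i)--(iii) in hand the four claims are short. Part~(1) is immediate from the oddness in (i), which makes $L(\gamma s)/s$ even. Continuity of $\mu$ is clear away from $s=0$, and at $s=0$ it follows from the limit in (i), which also matches the prescribed value $\mu(0)=1+M_s\gamma/3$; the two-sided bound in Part~(2) is then (ii) and (iii) translated back through $\mu(s)=1+M_s L(\gamma s)/s$, the equality case being exactly $s=0$. Part~(3) follows by integrating the pointwise bound $t \leq t\mu(t) \leq (1+M_s\gamma/3)\,t$ over $(0,s)$. Finally, for Part~(4) I would differentiate the identity $M'(s) = s + M_s L(\gamma s)$ once more to get $M''(s) = 1 + M_s\gamma\,L'(\gamma s) \geq 1 > 0$, invoking (ii) (extended by $L'(0)=1/3$), so that $M$ is in fact strictly convex.
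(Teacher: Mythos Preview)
Your proof is correct and follows the same elementary-analysis approach as the paper's sketch: both reduce everything to explicit properties of the Langevin expression, with part~(3) obtained by integrating the pointwise bounds from part~(2). The only cosmetic difference is in part~(4), where the paper first writes down the closed-form primitive $M(s)=s^2/2+M_s\bigl(\ln\sinh(\gamma s)-\ln s-\ln\gamma\bigr)/\gamma$ before checking convexity, whereas you go straight to $M''(s)=1+M_s\gamma\,L'(\gamma s)>0$---arguably the tidier route, since computing $M''$ from the closed form lands on exactly the same expression anyway.
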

\begin{proof}[Sketch of proof]
 $1.$ and $2.$ follow from the explicit expression of $\mu$. 3. follows from $2.$, and $4.$ from the fact that a primitive is explicitly computable for a Langevin law: $M(s)=s^2/2+M_s\,\big(\ln\big(\sinh(\gamma s)\big)-\ln s-\ln \gamma\big)/\gamma$.
\end{proof}
\begin{figure}[ht]
 \centering
 \begin{tabular}{cc}
  \includegraphics[width=0.45\textwidth]{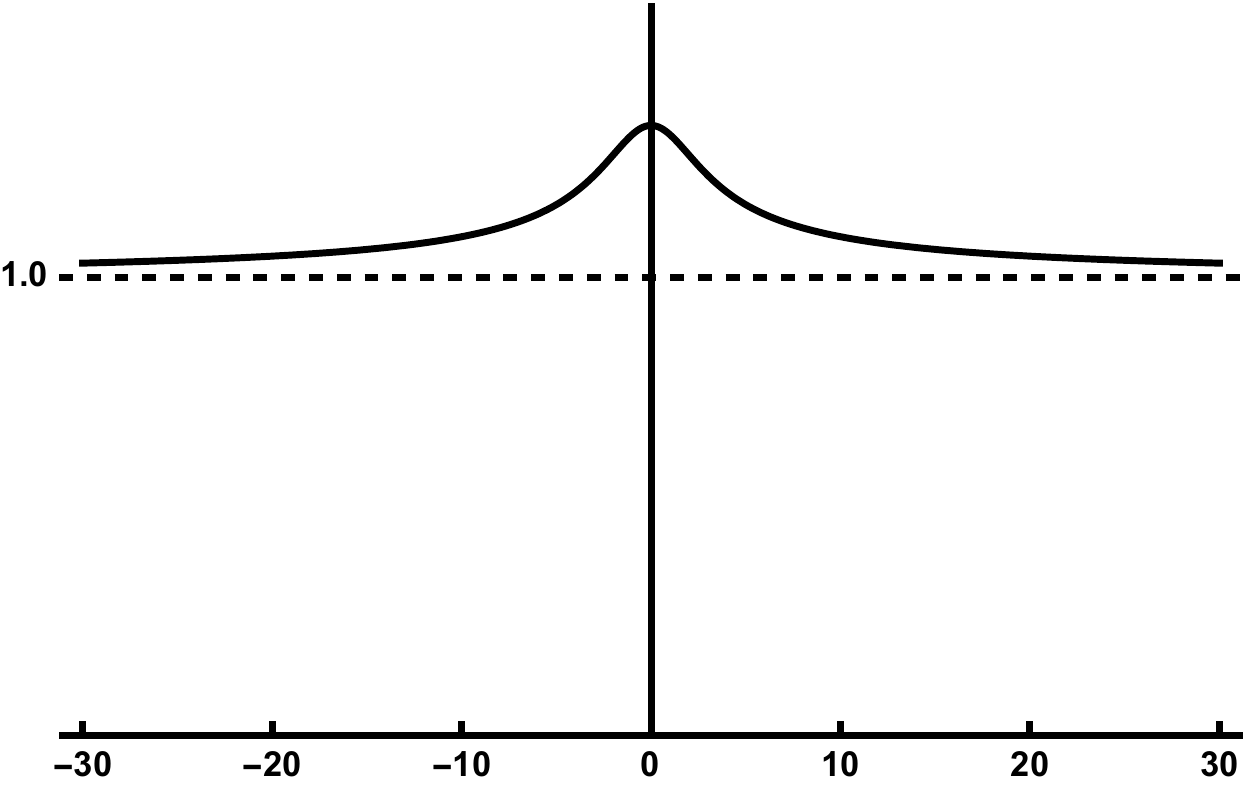} &\quad \includegraphics[width=0.45\textwidth]{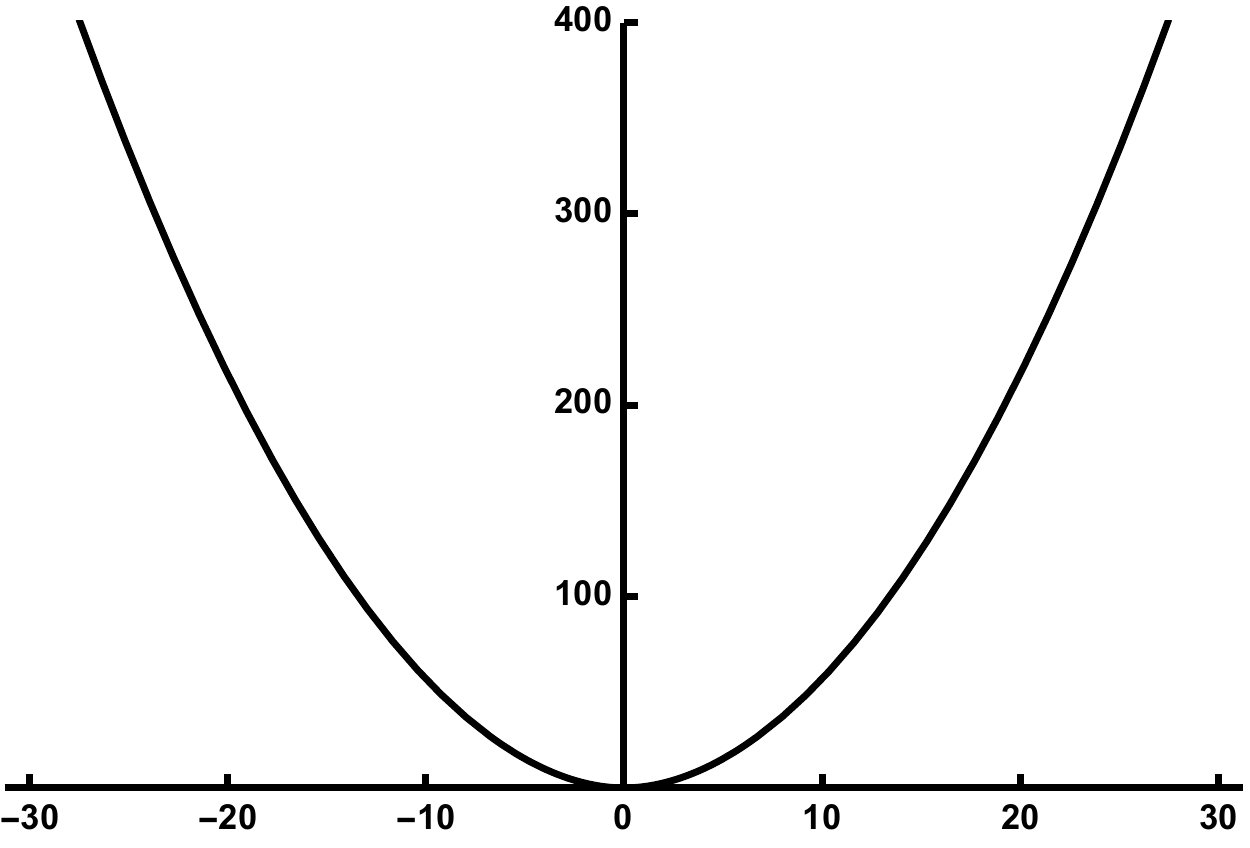}
 \end{tabular}
 \caption{A plot of the function $\mu$ defined as in \eqref{mu} with $\gamma=M_s=1$ \textit{(left)} and of its primitive \textit{(right)}.}
\end{figure}

This has two direct consequences: First, we have that $p_0>0$ since the expression $\mu(1)\left(\frac{1}{2}\mu(1)-1\right)$ has an infimum value of $-\frac{1}{2}$ for $\mu(1)\strongly 1$ (which is not attained since $\mu(1)>1$) and $M(1)>\frac{1}{2}$. Second, we can define the domain of \eqref{functional}; the functional is well defined for $ u \in H^1(D)$ and $\eta \in BV(\Omega)$.
\section{The mathematical setting}
\subsection{A more general framework}
From now on, and in order to shorten the formulas, we will use the notation $d\mathbf x$ for the Lebesgue measure in $\mathbb R^3$. We will drop the assumption that the interface can be described by the graph of a function $\eta$ and use only the minimal assumptions needed for the magnetization of the ferrofluid. Motivated by the properties of generic Langevin laws, we pose the following:
\begin{assumptions}\label{assume}
 \begin{enumerate}
  \item The physical constants $b,\tau,\mu,p_0\in\mathbb R$ are positive.
  \item The function $M:\mathbb R\strongly\mathbb R$ is convex and there exists $C_M>1$ such that
  \begin{equation}
   \label{assumeM}\frac{1}{2}s^2\leq M(s)\leq \frac{C_M}{2} s^2\ \text{ for all $s\in\mathbb R$}.
  \end{equation}
 \end{enumerate}
\end{assumptions}
Concerning the magnetic potential, define the space
 \begin{equation}
  \label{Wcyl}H^1_{cyl}(D)\defeq\big\{ u \in H^1(D): u |_{\partial_{cyl}D}=0\big\}
 \end{equation}
equipped with the norm $\lnorm u\rnorm_{cyl}\defeq \lnorm \nabla u\rnorm_{L^2}$ which, due to Poincar\'e's inequality, is equivalent to the standard Sobolev norm. In contrast to \cite{GrovesLloydEtAl2017}, we address the problem in a weaker form, namely as a free boundary problem. To that end, define the set of characteristic functions
\begin{equation}
 \label{X}  X (D)\defeq \bigg\{\chi\in BV(D): \chi\in\{0,1\}\text{ in } D,\ \int_D \chi\;d\mathbf x=|\Omega|\bigg\}
\end{equation}
and note that it is a weakly-$*$ closed subset of $BV(D)$ (every $L^1$ convergent sequence has an a.e. convergent subsequence and thus the limit function will be a.e. either $0$ or $1$. The integrals then converge to the correct value due to the dominated convergence theorem). The volume condition is due to the fact that the ferrofluid is assumed to be incompressible.

We consider the functional $J:H^1_{cyl}(D)\times  X (D)\strongly \mathbb R$ defined by
\begin{equation}
 \label{GeneralFunctional}
 \begin{aligned}
  J( u ,\chi)  \defeq {}& \int_D \left(\chi\,M(|\nabla u |)+\frac{1}{2}(1-\chi)\,|\nabla u |^2\right) d\mathbf x+\mu\int_\Omega \left( u |_{z=-1}- u |_{z=1}\right)dxdy\\[5pt]
  {}&-\int_D\left(b\,z\,\chi+p_0\,\chi\right)d\mathbf x-\tau\int_D |\nabla\chi|,
 \end{aligned}
\end{equation}
where $\int_D |\nabla\chi|$ denotes the total variation of $\chi$, and note that 
\begin{equation}
 \label{GeneralFunctional1}
 J( u ,\chi)=J_1( u ,\chi)-J_2(\chi)-\mu\int_D  u _z\;d\mathbf x,
\end{equation}
where
\begin{align}
 J_1( u ,\chi)  \defeq {}& \int_D \left(\chi\,M(|\nabla u |)+\frac{1}{2}(1-\chi)\,|\nabla u |^2\right) d\mathbf x,\\
 J_2(\chi)\defeq {}& \int_D\left(b\,z\,\chi+p_0\,\chi\right)d\mathbf x+\tau\int_D |\nabla\chi|.
\end{align}

The reason to consider the above functional lies in the following: as already mentioned, critical points of \eqref{functional} are weak solutions of the ferrofluid problem \eqref{non_hom_ferroPDE}-\eqref{free_surface_equation}. The functional $J$ defined above is an appropriate extension of \eqref{functional} where we have dropped the assumption that the interface can be described by the graph of $\eta$. The function $\eta$ is replaced by a function $\chi$ which, as a characteristic function, yields the set that is occupied by the ferrofluid. Precisely, for a function $\eta\in BV(\Omega)$ holds (see eg. \cite[Theorem 16.4, p.163]{Giusti1984})
\begin{equation*}
 J( u ,\chi_{\{z<\eta\}})=F( u ,\eta)-\frac{b}{2}|\Omega|.
\end{equation*}

All in all, critical points $( u ,\chi)$ with $\chi$ being the characteristic function of some open $D_F\subset D$ with appropriately smooth boundary will satisfy problem \eqref{non_hom_ferroPDE}-\eqref{free_surface_equation} locally, in the sense that the part of $\partial D_F$ that lies inside $D$ is locally the graph of $\eta$, and 
\begin{equation*}\label{subst}
\psi\defeq  u |_{D\setminus D_F}\ \text{ and } \ \phi\defeq  u |_{D_F}
\end{equation*}
satisfy the differential equations in a weak sense. These critical points are sought as saddle points of $J$ since the functional is now convex-concave: $J_2$ is convex, $J_1(\cdot,\chi)$ is convex and $J_1( u ,\cdot)$ is affine.

\subsection{The framework of abstract minimax theory of convex-concave functions}
Before we proceed with the exposition we give some definitions, following the analysis of \textit{saddle} or \textit{convex-concave} functions given in \cite{BarbuPrecupanu2012} and \cite{PapageorgiouKyritsi2009}.
\begin{definition}
 Let $\mathrm X $ and $\mathrm Y $ be two nonempty sets and $\Phi: \mathrm X \times \mathrm Y \strongly \mathbb R\cup \{+\infty\}$ a function. We say that $(\mathrm x_0,\mathrm y_0)$ is a \emph{saddle point} of $\Phi$, if we have
 \begin{equation}
  \Phi(\mathrm x_0,\mathrm y)\leq\Phi(\mathrm x_0,\mathrm y_0)\leq \Phi(\mathrm x,\mathrm y_0)
 \end{equation}
 for all $(\mathrm x,\mathrm y)\in \mathrm X \times \mathrm Y $.
\end{definition}
\begin{definition}
 Let $\mathrm X $ and $\mathrm Y $ be two nonempty sets and $\Phi: \mathrm X \times \mathrm Y \strongly \mathbb R\cup \{+\infty\}$ a function. We say that $\Phi$ has a \emph{saddle value} $c$ if
 \begin{equation}
  \adjustlimits\sup_{\mathrm y\in \mathrm Y }\inf_{\mathrm x\in \mathrm X }\Phi(\mathrm x,\mathrm y)=\adjustlimits\inf_{\mathrm x\in \mathrm X }\sup_{\mathrm y\in \mathrm Y }\Phi(\mathrm x,\mathrm y)\eqdef c.
 \end{equation}
\end{definition}
\begin{definition}
 Let $\mathrm X $ and $\mathrm Y $ be two nonempty sets and $\Phi: \mathrm X \times \mathrm Y \strongly \mathbb R\cup \{+\infty\}$ a function. We say that $\Phi$ \emph{satisfies a minimax equality} at $(\mathrm x_0,\mathrm y_0)\in \mathrm X \times \mathrm Y $ if:
 \begin{enumerate}
  \item The function $\Phi$ has a saddle value.
  \item There exists $\mathrm x_0\in \mathrm X $ such that $\displaystyle\sup_{\mathrm y\in \mathrm Y }\Phi(\mathrm x_0,\mathrm y)=\adjustlimits\inf_{\mathrm x\in \mathrm X }\sup_{\mathrm y\in \mathrm Y }\Phi(\mathrm x,\mathrm y)$.
  \item There exists $\mathrm y_0\in \mathrm Y $ such that $\displaystyle\inf_{\mathrm x\in \mathrm X }\Phi(\mathrm x,\mathrm y_0)=\adjustlimits\sup_{\mathrm y\in \mathrm Y }\inf_{\mathrm x\in \mathrm X }\Phi(\mathrm x,\mathrm y)$.
 \end{enumerate}
\end{definition}
One can then directly prove the following fundamental result.
\begin{proposition}[{\cite[Proposition 2.105]{BarbuPrecupanu2012},\cite[Proposition 2.3.5]{PapageorgiouKyritsi2009}}]\label{minimaxEq}
 Let $\mathrm X $ and $\mathrm Y $ be two nonempty sets and $\Phi: \mathrm X \times \mathrm Y \strongly \mathbb R\cup \{+\infty\}$. The function $\Phi$ satisfies a minimax equality at $(\mathrm x_0,\mathrm y_0)$ if and only if $(\mathrm x_0,\mathrm y_0)$ is a saddle point of $\Phi$ in $\mathrm X \times \mathrm Y $.
\end{proposition}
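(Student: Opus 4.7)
The plan is to use the universal weak duality inequality
\[
 \sup_{\mathrm y\in\mathrm Y}\inf_{\mathrm x\in\mathrm X}\Phi(\mathrm x,\mathrm y)\leq\inf_{\mathrm x\in\mathrm X}\sup_{\mathrm y\in\mathrm Y}\Phi(\mathrm x,\mathrm y),
\]
which holds with no assumption on $\Phi$ (pointwise $\inf_{\mathrm x'}\Phi(\mathrm x',\mathrm y)\leq \Phi(\mathrm x,\mathrm y)\leq\sup_{\mathrm y'}\Phi(\mathrm x,\mathrm y')$, then take $\sup_{\mathrm y}$ on the left side and $\inf_{\mathrm x}$ on the right). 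The strategy is to sandwich the value $\Phi(\mathrm x_0,\mathrm y_0)$ between the two mixed extrema in each direction of the equivalence and let this forced chain of equalities give us both the saddle inequalities and the minimax identity.

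For the implication ``saddle point $\Rightarrow$ minimax equality'': starting from $\Phi(\mathrm x_0,\mathrm y)\leq \Phi(\mathrm x_0,\mathrm y_0)\leq \Phi(\mathrm x,\mathrm y_0)$, I would take $\sup_{\mathrm y}$ on the left inequality to obtain $\sup_{\mathrm y}\Phi(\mathrm x_0,\mathrm y)\leq\Phi(\mathrm x_0,\mathrm y_0)$, and then $\inf_{\mathrm x}$ on the right to obtain $\Phi(\mathrm x_0,\mathrm y_0)\leq\inf_{\mathrm x}\Phi(\mathrm x,\mathrm y_0)$. Chaining these with the trivial bounds $\inf_{\mathrm x}\sup_{\mathrm y}\Phi\leq\sup_{\mathrm y}\Phi(\mathrm x_0,\mathrm y)$ and $\inf_{\mathrm x}\Phi(\mathrm x,\mathrm y_0)\leq\sup_{\mathrm y}\inf_{\mathrm x}\Phi$ and with weak duality forces every inequality to be an equality with common value $\Phi(\mathrm x_0,\mathrm y_0)$. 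This gives the saddle value $c=\Phi(\mathrm x_0,\mathrm y_0)$ and simultaneously verifies the two realization conditions in the definition of minimax equality at $(\mathrm x_0,\mathrm y_0)$.

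For the converse ``minimax equality $\Rightarrow$ saddle point'': from condition 2, $\sup_{\mathrm y}\Phi(\mathrm x_0,\mathrm y)=c$, so in particular $\Phi(\mathrm x_0,\mathrm y)\leq c$ for every $\mathrm y\in\mathrm Y$ and $\Phi(\mathrm x_0,\mathrm y_0)\leq c$; from condition 3, $\inf_{\mathrm x}\Phi(\mathrm x,\mathrm y_0)=c$, so $\Phi(\mathrm x,\mathrm y_0)\geq c$ for every $\mathrm x\in\mathrm X$ and $\Phi(\mathrm x_0,\mathrm y_0)\geq c$. The two bounds at $(\mathrm x_0,\mathrm y_0)$ yield $\Phi(\mathrm x_0,\mathrm y_0)=c$, and substituting this back into the pointwise inequalities produces exactly $\Phi(\mathrm x_0,\mathrm y)\leq\Phi(\mathrm x_0,\mathrm y_0)\leq\Phi(\mathrm x,\mathrm y_0)$, i.e.\ the saddle point property.

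There is no genuine obstacle here: the proof is purely manipulational and uses only the definitions plus the always-true weak duality inequality. The only point requiring a small amount of care is bookkeeping — making sure one does not conflate the saddle value $c$ with $\Phi(\mathrm x_0,\mathrm y_0)$ before it has been shown that they agree, since conditions 2 and 3 in the definition of the minimax equality do not \emph{a priori} assert $\Phi(\mathrm x_0,\mathrm y_0)=c$; that identity emerges only after combining both conditions.
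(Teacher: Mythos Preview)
Your proof is correct and is precisely the standard argument for this classical fact. The paper does not actually give its own proof of this proposition: it merely states that ``one can then directly prove'' it and cites \cite{BarbuPrecupanu2012,PapageorgiouKyritsi2009}. Your write-up supplies exactly the direct manipulation the paper alludes to, and your remark about not prematurely identifying $\Phi(\mathrm x_0,\mathrm y_0)$ with the saddle value $c$ is the only point where any care is needed.
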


We outline the strategy for proving existence of saddle points: The special type of coupling between $u$ and $\chi$ that occurs only in $J_1$ allows us to iteratively apply the direct method of the calculus of variations to solve the min-max and the max-min problem. The next step is to show that the functional has a saddle value, that is, the max-min and the min-max problems are solved at the same value. To that end, we will use a tool from \cite{PapageorgiouKyritsi2009}, a ``coincidence theorem'', which is a corollary of a classical result of Knaster, Kuratowski and Mazurkiewicz \cite{KnasterKuratowskiEtAl1929}; we provide it here without a proof.
\begin{proposition}[{\cite[Proposition 2.3.10]{PapageorgiouKyritsi2009}}]\label{KKM}
 Assume that $\mathbf X$ and $\mathbf Y$ are Hausdorff topological vector spaces, $\mathrm X\subseteq\mathbf X$ and $\mathrm Y\subseteq \mathbf Y$ are nonempty, compact and convex sets, and $F,G:\mathrm X\strongly 2^{\mathrm Y}$ are two set-valued maps that satisfy:
 \begin{enumerate}
  \item For all $\mathrm x\in \mathrm X$, the set $F(\mathrm x)$ is open in $\mathrm Y$ and the set $G(\mathrm x)$ is not empty and convex.
  \item For all $\mathrm y\in\mathrm  Y$, the set $G^{-1}(\mathrm y)\defeq\{\mathrm x\in \mathrm X:\mathrm y\in G(\mathrm x)\}$ is open in $\mathrm X$ and the set $F^{-1}(\mathrm y)\defeq\{\mathrm x\in \mathrm X:y\in F(\mathrm x)\}$ is not empty and convex.
 \end{enumerate}
 Then there exists $\mathrm x_0\in \mathrm X$ such that $F(\mathrm x_0)\cap G(\mathrm x_0)\neq \emptyset$.
\end{proposition}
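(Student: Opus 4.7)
The plan is to reduce the coincidence assertion to a fixed-point statement on the product $\mathrm X\times \mathrm Y$, and then to deduce that statement directly from the KKM lemma already invoked in the paragraph preceding the proposition.

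Concretely, I would define the set-valued map $T:\mathrm X\times \mathrm Y\strongly 2^{\mathrm X\times \mathrm Y}$ by
\[
 T(\mathrm x,\mathrm y)\defeq F^{-1}(\mathrm y)\times G(\mathrm x).
\]
Hypothesis (2) ensures that the first factor is nonempty and convex, hypothesis (1) that the second factor is nonempty and convex; hence $T$ has nonempty convex values. For any target $(\mathrm u,\mathrm v)\in \mathrm X\times \mathrm Y$ a direct computation yields
\[
 T^{-1}(\mathrm u,\mathrm v)=\{(\mathrm x,\mathrm y):\mathrm u\in F^{-1}(\mathrm y),\ \mathrm v\in G(\mathrm x)\}=G^{-1}(\mathrm v)\times F(\mathrm u),
\]
which is open in $\mathrm X\times \mathrm Y$ because $F(\mathrm u)$ and $G^{-1}(\mathrm v)$ are open by hypotheses (1) and (2). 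Since $\mathrm X\times \mathrm Y$ is a compact convex subset of the Hausdorff topological vector space $\mathbf X\times\mathbf Y$, the Fan--Browder fixed-point theorem applies and delivers $(\mathrm x_0,\mathrm y_0)\in T(\mathrm x_0,\mathrm y_0)$; reading off coordinates gives $\mathrm x_0\in F^{-1}(\mathrm y_0)$ and $\mathrm y_0\in G(\mathrm x_0)$, i.e.\ $\mathrm y_0\in F(\mathrm x_0)\cap G(\mathrm x_0)$, which is the desired coincidence.

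Since Fan--Browder is itself a quick consequence of KKM, I would prove it on the spot to keep the argument self-contained. Arguing by contradiction, assume $T$ has no fixed point and set $C_{\mathrm z}\defeq(\mathrm X\times \mathrm Y)\setminus T^{-1}(\mathrm z)$; the sets $C_{\mathrm z}$ are closed, hence compact, in $\mathrm X\times \mathrm Y$. I claim that $\{C_{\mathrm z}\}_{\mathrm z\in \mathrm X\times \mathrm Y}$ is a KKM family: if some $\mathrm z\in\conv\{\mathrm z_1,\ldots,\mathrm z_n\}$ lay in none of the $C_{\mathrm z_i}$, then every $\mathrm z_i$ would belong to $T(\mathrm z)$, and by convexity of $T(\mathrm z)$ one would obtain $\mathrm z\in T(\mathrm z)$, contradicting the no-fixed-point assumption. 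Ky Fan's infinite-dimensional version of the KKM lemma then provides $\mathrm z_0\in\bigcap_{\mathrm z}C_{\mathrm z}$; but such a $\mathrm z_0$ would satisfy $T(\mathrm z_0)=\emptyset$, contradicting the nonemptiness of the values of $T$ established above.

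The main obstacle is purely topological, namely the passage from the simplex-based KKM lemma of Knaster, Kuratowski and Mazurkiewicz to its Hausdorff-TVS generalization: it is precisely the compactness of $\mathrm X$ and $\mathrm Y$ (and hence of $\mathrm X\times \mathrm Y$) that lets the finite intersections produced by the classical KKM theorem aggregate into a nonempty global intersection. Everything else in the argument is bookkeeping, because the ansatz $T(\mathrm x,\mathrm y)=F^{-1}(\mathrm y)\times G(\mathrm x)$ packages the ``open fiber / convex value'' hypotheses on $F$ and $G$ into exactly the form demanded by the Fan--Browder machinery.
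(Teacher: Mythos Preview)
The paper does not actually prove this proposition: it is quoted from \cite{PapageorgiouKyritsi2009} and the authors explicitly state ``we provide it here without a proof.'' There is therefore no in-paper argument to compare your proposal against.

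That said, your argument is correct and is in fact the standard route to such coincidence theorems. The reduction $T(\mathrm x,\mathrm y)=F^{-1}(\mathrm y)\times G(\mathrm x)$ cleanly converts the hypotheses on $F$ and $G$ into the convex-value/open-fiber format of Fan--Browder, your computation of $T^{-1}(\mathrm u,\mathrm v)=G^{-1}(\mathrm v)\times F(\mathrm u)$ is right, and the derivation of Fan--Browder from Ky~Fan's KKM lemma via the complements $C_{\mathrm z}$ is the textbook one. The only point worth flagging is that Ky~Fan's version of KKM requires at least one of the $C_{\mathrm z}$ to be compact; here that is automatic since $\mathrm X\times\mathrm Y$ is compact and each $C_{\mathrm z}$ is closed in it, exactly as you note.
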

With the help of the latter and applying Proposition \ref{minimaxEq}, we will prove the existence of saddle points for $J$ in the next section.

\section{The main results}
\subsection{Existence of saddle points}
We begin the section with our main existence theorem and its proof.
\begin{theorem}\label{ExThm}
 The functional $J$, defined by \eqref{GeneralFunctional}, possesses a non-trivial saddle point
 \begin{equation*}
  ( u _0,\chi_0)\in H^1_{cyl}(D)\times  X (D),
 \end{equation*}
 that is, $u_0\nequiv 0$ and
 \begin{equation*}
  J( u _0,\chi_0)=\min_{ u \in H^1_{cyl}(D)}\max_{\chi\in  X (D)} J( u ,\chi).
 \end{equation*}
\end{theorem}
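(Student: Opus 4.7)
The plan is to follow the roadmap announced just before the statement: apply the direct method of the calculus of variations to produce candidate optimizers $u_0$ and $\chi_0$ for the min-max and max-min problems, invoke Proposition \ref{KKM} to obtain the minimax equality, and conclude via Proposition \ref{minimaxEq} that $(u_0,\chi_0)$ is a saddle point. Concretely, for fixed $\chi\in X(D)$ the map $u\mapsto J(u,\chi)$ is convex, weakly lower semicontinuous on $H^1_{cyl}(D)$, and coercive: Assumption \ref{assume} gives $J_1(u,\chi)\geq \tfrac{1}{2}\|\nabla u\|_{L^2(D)}^2$ while the boundary-trace term $\mu\int_\Omega(u|_{z=-1}-u|_{z=1})\,dxdy$ is controlled by $C\|u\|_{cyl}$. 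Hence the infimum is attained and $h(\chi)\defeq\inf_u J(u,\chi)$ is well defined; since $J$ is $\chi$-affine in $J_1$ and concave through $-\tau\int_D|\nabla\chi|$, $h$ is concave, and dominated convergence together with lower semicontinuity of the total variation make it upper semicontinuous in $L^1(D)$. The concave TV term forces maximizing sequences to be $BV$-bounded, so Rellich's theorem and weak-$*$ closedness of $X(D)$ yield $\chi_0$. A symmetric argument applied to the convex, lower semicontinuous, coercive (via $g(u)\geq J(u,\chi_*)$ for a fixed $\chi_*$) functional $g(u)\defeq\sup_\chi J(u,\chi)$ yields $u_0$.

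To establish the minimax equality I would truncate and apply Proposition \ref{KKM}. Fix $R,N>0$ and set $\mathrm X_R\defeq\{u\in H^1_{cyl}(D):\|u\|_{cyl}\leq R\}$, weakly compact and convex, together with $\mathrm Y_N\defeq\{v\in BV(D):0\leq v\leq 1,\ \int_D v=|\Omega|,\ \int_D|\nabla v|\leq N\}$, the natural convex relaxation of the $BV$-bounded portion of $X(D)$, which is $L^1$-compact. The functional $J$ extends to $\mathrm X_R\times\mathrm Y_N$ with the same convex-concave structure (affineness of $J_1$ in $\chi$ and convexity of the TV term persist). For $\varepsilon>0$ and $\lambda\defeq\inf_{\mathrm X_R}\sup_{\mathrm Y_N}J-\varepsilon$, the set-valued maps
\[
 F(u)\defeq\{v\in\mathrm Y_N:J(u,v)>\lambda\},\qquad G(u)\defeq\{v\in\mathrm Y_N:J(u,v)\geq\lambda\}
\]
meet the openness/convexity hypotheses of Proposition \ref{KKM} thanks to concavity and upper semicontinuity of $J(u,\cdot)$, so the coincidence theorem produces $v_\varepsilon\in\mathrm Y_N$ with $J(u,v_\varepsilon)\geq\lambda$ for every $u\in\mathrm X_R$. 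Letting $\varepsilon\to 0$ and then $R,N\to\infty$ gives $\sup\inf\geq\inf\sup$, and the reverse inequality is automatic. Finally, a coarea/bang-bang argument, using affineness of $J_1$ in $\chi$ together with the layer-cake identity $\int_D|\nabla v|=\int_0^1\mathrm{Per}(\{v>t\})\,dt$, shows that one may take $v_\varepsilon$ to be the characteristic function of a set of measure $|\Omega|$, so the equality descends from $\mathrm Y_N$ to $X(D)$.

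Non-triviality of $u_0$ follows from a first-variation argument: if $u_0\equiv 0$, then since $u_0$ minimizes $J(\cdot,\chi_0)$ and $M'(0)=0$ (so the $J_1$ contributions vanish to first order), one would obtain
\[
 0=\left.\tfrac{d}{dt}\right|_{t=0}J(t\varphi,\chi_0)=-\mu\int_D\varphi_z\,d\mathbf x=\mu\int_\Omega\bigl(\varphi|_{z=-1}-\varphi|_{z=1}\bigr)\,dxdy
\]
for every $\varphi\in H^1_{cyl}(D)$, which fails for, say, $\varphi(x,y,z)=(1-z)\psi(x,y)$ with $\psi\in C^\infty_c(\Omega)$ nontrivial. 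I expect the main obstacle to be the minimax equality in the second step: $X(D)$ is not convex, so Sion-type arguments do not apply directly, and one has to pass through the relaxation $\mathrm Y_N$, verify the KKM hypotheses in this weakly-$*$ compact infinite-dimensional setting, and then exploit the affine/coarea structure of $J$ to return to a genuine characteristic function.
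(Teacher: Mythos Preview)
Your first step (direct method for both the inner minimization in $u$ and the outer maximization in $\chi$, and symmetrically for the min-max) is correct and matches the paper. The non-triviality argument is essentially right, though the paper uses the upper bound $M(s)\leq \tfrac{C_M}{2}s^2$ directly to compare $J(\varepsilon\tilde u,\chi_0)$ with $J(0,\chi_0)$ rather than a first variation; this sidesteps the fact that Assumption~\ref{assume} does not assert $M\in C^1$.

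The genuine gap is in your application of Proposition~\ref{KKM}. With your choices
\[
 F(u)=\{v:J(u,v)>\lambda\},\qquad G(u)=\{v:J(u,v)\geq\lambda\},
\]
the hypotheses fail: $J(u,\cdot)$ is only \emph{upper} semicontinuous, so the superlevel set $F(u)$ need not be open; and $J(\cdot,v)$ is \emph{convex}, so $F^{-1}(v)=\{u:J(u,v)>\lambda\}$ need not be convex. More importantly, the conclusion of Proposition~\ref{KKM} is that there exists a single $u_0$ with $F(u_0)\cap G(u_0)\neq\emptyset$; since here $F(u)\subset G(u)$, this says only that some $J(u_0,v)>\lambda$, not that a $v_\varepsilon$ works for \emph{all} $u$. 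Your subsequent coarea/bang--bang step inherits this problem: even if one had $J(u,v_\varepsilon)\geq\lambda$ for all $u$, the level $t$ at which $\chi_{\{v_\varepsilon>t\}}$ both has the correct volume and beats $\lambda$ would depend on $u$.

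The paper's device is to use Proposition~\ref{KKM} as a \emph{contradiction machine}. One supposes a strict gap, picks $c$ with
\[
 \max_{\chi\in X(D)}\min_u J<c<\min_u\max_{\chi\in X(D)} J,
\]
and on the convex relaxation $\clconv X(D)$ sets
\[
 F(u)=\{\rho:J(u,\rho)<c\},\qquad G(u)=\{\rho:J(u,\rho)>c\}.
\]
Now the semicontinuity and convexity line up: $F(u)$ is open because $J(u,\cdot)$ is upper semicontinuous, $G^{-1}(\rho)$ is open because $J(\cdot,\rho)$ is lower semicontinuous, $G(u)$ is convex by concavity in $\rho$, and $F^{-1}(\rho)$ is convex by convexity in $u$. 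The coincidence $F(u_0)\cap G(u_0)\neq\emptyset$ then yields $c<J(u_0,\rho_0)<c$, a contradiction. Since the assumed gap was between quantities computed over $X(D)$ itself, no bang--bang argument is needed to descend from $\clconv X(D)$.
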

\begin{proof}
 We are looking for two pairs of functions $( u _{\mathrm{Mm}},\chi_{\mathrm{Mm}}),( u _{\mathrm{mM}},\chi_{\mathrm{mM}})\in H^1_{cyl}(D)\times  X (D)$, such that
 \begin{align}
  J( u _{\mathrm{Mm}},\chi_{\mathrm{Mm}}) {}& = \max_{\chi\in X(D)}\min_{ u \in H^1_{cyl}(D)} J( u ,\chi),\label{1n}\\[0.5ex]
  J( u _{\mathrm{mM}},\chi_{\mathrm{mM}}) {}& = \min_{ u \in H^1_{cyl}(D)}\max_{\chi\in X(D)} J( u ,\chi).\label{2n}
 \end{align}
 We first deal with the max-min case \eqref{1n}: Fix $\chi\in  X (D)$ and note that
 \begin{equation}
  \label{phiz} \int_D  u _z\;d\mathbf x\leq \int_D | u _z|\;d\mathbf x\leq \lnorm  u _z\rnorm_{L^2(D)} \lnorm 1 \rnorm_{L^2(D)}\leq \sqrt{|D|}\lnorm  u \rnorm_{cyl}.
 \end{equation}
 This, together with the growth condition on $M$ and \eqref{GeneralFunctional1} we calculate
 \begin{align}
  J( u ,\chi)\geq{}& \int_D \left(\chi\,\frac{1}{2}\,|\nabla u |^2+\frac{1}{2}(1-\chi)\,|\nabla u |^2\right) d\mathbf x-\mu\,\int_D  u _z\;d\mathbf x-J_2(\chi)\notag\\
  \geq{}& \frac{1}{2}\,\lnorm  u \rnorm_{cyl}^2-\mu\,\sqrt{|D|}\lnorm  u \rnorm_{cyl}-J_2(\chi),\label{coercivity}
 \end{align}
 which implies that the functional $J(\cdot,\chi)$ is bounded below and, in turn, the existence of a minimizing sequence $\{ u _{k,\chi}\}_{k\in\mathbb N}\subset H^1_{cyl}(D)$. Relation \eqref{coercivity} implies that this sequence is bounded and thus possesses a weak limit $ u _{\chi}\in H^1_{cyl}(D)$. Moreover, the functional $J(\cdot,\chi)$ is weakly lower semi-continuous: the real function $h:\mathbb R^3\times D\strongly \mathbb R$ with
 \begin{equation*}
  h(\xi,x,y,z)=  \chi(x,y,z)\,M(|\xi|)+\frac{1}{2}\Big(1-\chi(x,y,z)\Big)\,|\xi|^2-\mu\,\xi_3
 \end{equation*}
 is Carath\'eodory, satisfies $h(\xi,x,y,z)\geq \frac{1}{2}\,|\xi|^2-\mu\,\xi_3\in L^1(D)$ for almost all $\xi$, and it is strictly convex in $\xi$ a.e. in $D$ (since $M$ is convex and $\xi\mapsto |\xi|^2$ is strictly convex); see for example \cite[Theorem 1.6]{Struwe2008}. Moreover, the boundary integral term is weakly continuous, since the trace operator is compact from $H^1(D)$ into $L^2(\partial D)$ (see for example \cite[Theorem 6.2, p.103]{Necas2012}). Thus, $ u _{\chi}\in H^1_{cyl}(D)$ is a unique minimizer of $J(\cdot,\chi)$ in $H^1_{cyl}(D)$.
 
 The set
 \begin{equation}
  \label{SetMax} \mathbf X\defeq \big\{J( u _{\chi},\chi)=\min_{ u \in H^1_{cyl}(D)} J( u ,\chi):\chi \in X(D)\big\}
 \end{equation}
 is bounded above:
 \begin{equation}\label{UpBnd}
  J( u _{\chi},\chi)\leq J(0,\chi) = -J_2(\chi) \leq \frac{b}{2}\,|\Omega|.
 \end{equation}
 This implies that there exists a sequence $\{\chi_{k}\}_{k\in \mathbb N}\subseteq X(D)$ such that, if we denote $ u _k\defeq u _{\chi_k}$,
 \begin{equation*}
  \lim_{k\rightarrow\infty}J( u _{k},\chi_{k}) = \lim_{k\rightarrow\infty} \min_{ u \in H^1_{cyl}(D)} J( u ,\chi_{k})= \sup \mathbf X.
 \end{equation*}
 Suppose that $J_2(\chi_k) \to +\infty$. Then, we have by \eqref{UpBnd} $J( u _{k},\chi_k)\to -\infty$, a contradiction, since $\sup \mathbf X>-\infty$. Therefore, $J_2(\chi_k)$ is uniformly bounded, and therefore there exists a function $\chi_{\mathrm{Mm}}\in X(D)$ such that $\chi_k \wstar \chi_{\mathrm{Mm}}$ in $BV(D)$ up to a subsequence. We claim that the functional $J( u ,\cdot)$ is weakly-$*$ upper semi-continuous, so that
 \begin{equation*}
  \label{USC} J( u ,\chi_{\mathrm{Mm}})\geq \limsup_{k\rightarrow \infty} J( u ,\chi_{k})\geq \lim_{k\rightarrow \infty} \min_{ u \in H^1_{cyl}(D)} J( u ,\chi_{k})=\sup \mathbf X,
 \end{equation*}
 for all $ u \in H^1_{cyl}(D)$, and thus
 \begin{equation*}
  \min_{ u \in H^1_{cyl}(D)} J( u ,\chi_{\mathrm{Mm}})=J( u _{\chi_{\mathrm{Mm}}},\chi_{\mathrm{Mm}})\geq \sup \mathbf X.
 \end{equation*}
 However, $J( u _{\chi_{\mathrm{Mm}}},\chi_{\mathrm{Mm}})\in\mathbf X$ and thus $J( u _{\chi_{\mathrm{Mm}}},\chi_{\mathrm{Mm}})= \sup \mathbf X$ so that we have solved \eqref{1n} with $u_{\mathrm{Mm}}\defeq u _{\chi_{\mathrm{Mm}}}$.

 \textit{Proof of the claim.} Fix $ u \in H^1_{cyl}(D)$ and take a sequence $\{\chi_{k}\}_{k\in \mathbb N}\subset X(D)$ and $\chi\in X(D)$ such that $\chi_{k}\wstar \chi$. Since the total variation is weakly-$*$ lower semi-continuous in $BV(D)$ and $\chi_{k}\wstar \chi$ implies $\chi_{k}\strongly \chi$ in $L^1(D)$ we get that $J_2$ is lower semi-continuous. To finish the proof of the claim we need to prove that $J_1( u ,\cdot)$ is continuous with respect to the $L^1$-topology: consider the sequence of real numbers $\big\{J_1( u ,\chi_{k})\big\}_{k\in\mathbb N}$ and pick an arbitrary subsequence $\big\{J_1( u ,\chi_{k_l})\big\}_{l\in\mathbb N}$. For the sequence of functions $\{\chi_{k_l}\}_{l\in \mathbb N}$ it still holds that $\chi_{k_l}\strongly \chi$ in $L^1(D)$, and thus we can extract a subsequence $\{\chi_{k_{l_m}}\}_{m\in \mathbb N}$ converging to $\chi$ almost everywhere. Moreover, since $C_M>1$
 \begin{equation*}
  \chi_{k_{l_m}}\, M(|\nabla u|)+\frac{1}{2}\,(1-\chi_{k_{l_m}})\,|\nabla u|^2\leq \frac{1}{2}\,|\nabla u|^2+\frac{1}{2}\,\chi_{k_{l_m}}\,(C_M-1)\,|\nabla u|^2\leq \frac{C_M}{2}\,|\nabla u|^2
 \end{equation*}
 so that, using Lebesgue's dominated convergence theorem, we get
 \begin{equation*}
  J_1( u ,\chi_{k_{l_m}})\strongly J_1( u ,\chi).
 \end{equation*}
 All in all, we have shown that each subsequence $\big\{J_1( u ,\chi_{k_l})\big\}_{l\in\mathbb N}$ possesses a further subsequence that converges to the same limit, namely  to $J_1( u ,\chi)$. Thus
 \begin{equation*}
  J_1( u ,\chi_{n,k})\strongly J_1( u ,\chi_{n})
 \end{equation*}
 which finishes the proof of the claim.

 Next, we solve the min-max problem \eqref{2n} in a similar manner: using \eqref{phiz} and the growth condition of $M$ we get
 \begin{equation*}
  J(u,\chi) \leq \frac{C_M}{2}\lnorm u\rnorm_{cyl}^2+\mu\,\sqrt{|D|}\,\lnorm u\rnorm_{cyl}+\frac{b}{2}\,|\Omega|-\min\{p_0,\tau\}\,\lnorm \chi\rnorm_{BV},
 \end{equation*}
 since
 \begin{align*}
 J_2(\chi)= {}& b \int_\Omega\left(\int_{-1}^0 z\,\chi\; dz+\int_{0}^1 z\,\chi\; dz\right)dxdy+p_0\int_D|\chi|\; d\mathbf x+\tau\int_D |\nabla\chi|\\
 \geq{}&b \int_\Omega\int_{-1}^0 z\,\chi\; dzdxdy+p_0\int_D|\chi|\; d\mathbf x+\tau\int_D |\nabla\chi|\\
 \geq{}&-\frac{b}{2}\,|\Omega|+p_0\int_D|\chi|\; d\mathbf x+\tau\int_D |\nabla\chi|\\
 \geq{}&-\frac{b}{2}\,|\Omega|+\min\{p_0,\tau\}\,\left(\int_D|\chi|\; d\mathbf x+\int_D |\nabla\chi|\right),
\end{align*}
so that there exists a bounded maximizing sequence $\{\chi_{k,u}\}_{k\in\mathbb N}\subset X(D)$. Thus, there exists $\chi_{u}\in X(D)$ which, by the weak-$*$ upper semi-continuity of $J(u,\cdot)$, is a maximizer of $J(u,\cdot)$. The set 
 \begin{equation}
  \label{SetMin} \mathbf Y\defeq \big\{J(u,\chi_u)=\max_{ \chi \in X(D)} J( u ,\chi):u \in H^1_{cyl}(D)\big\}
 \end{equation}
 is bounded below:
 \begin{equation}\label{LowBnd}
   \begin{aligned}
    J(u,\chi_u)\geq {}& J(u,\chi_{\{z>0\}})\\
    = {}&\int_\Omega \int_0^1 M(|\nabla u |)\;dzdxdy+\frac{1}{2}\int_\Omega \int_{-1}^0 |\nabla u | ^2\;dzdxdy-\mu\int_D u_z\;d\mathbf x\\
    {}& -\int_\Omega\int_0^1 (b\,z+p_0)\;dzdxdy-\tau\,\int_D|\nabla\chi_{\{z>0\}}|\\
    \geq {}& \frac{1}{2}\, \lnorm u\rnorm_{cyl}^2-\mu \, \sqrt{|D|}\,\lnorm u\rnorm_{cyl}-(b+p_0+\tau)\,|\Omega|\\
    \geq {}& -(b+p_0+\tau+\mu^2)\,|\Omega|
   \end{aligned}
 \end{equation}
 for all $u\in H^1_{cyl}(D)$. Thus there exists a sequence $\{u_k\}_{k\in\mathbb N}$ such that
 \begin{equation*}
  \lim_{k\rightarrow\infty}J(u_{k},\chi_{k}) = \lim_{k\rightarrow\infty} \max_{ \chi \in X(D)} J( u_k ,\chi)= \inf \mathbf Y,
 \end{equation*}
 where again $\chi_k\defeq \chi_{u_k}$. Estimate \eqref{LowBnd} implies that $\{u_k\}_{k\in\mathbb N}$ is a bounded sequence in $H^1_{cyl}(D)$ and thus there exists $u_{\mathrm{mM}}\in H^1_{cyl}(D)$  such that $u_k\weakly u_{\mathrm{mM}}$. As already shown, $J(\cdot,\chi)$ is weakly lower semi-continuous so that
 \begin{equation*}
  \label{LSC} J( u_{mM} ,\chi)\leq \liminf_{k\rightarrow \infty} J( u_{k} ,\chi)\leq \lim_{k\rightarrow \infty} \max_{ \chi \in X(D)} J( u_k ,\chi)=\inf \mathbf Y,
 \end{equation*}
 for all $\chi\in X(D)$. Taking the maximum over $\chi$ we get $J( u_{mM} ,\chi_{u_{mM}})\leq\inf \mathbf Y$ and, since $J( u_{mM} ,\chi_{u_{mM}})\in \mathbf Y$, we get $J(u_{\mathrm{mM}},\chi_{\mathrm{mM}})=\inf \mathbf Y$ where $\chi_{\mathrm{mM}}\defeq \chi_{u_{\mathrm{mM}}}$.
 
 The last step is to prove that the functional $J$ has a saddle value, that is,
 \begin{equation*}
  J( u _{\mathrm{Mm}},\chi_{\mathrm{Mm}})=J( u _{\mathrm{mM}},\chi_{\mathrm{mM}})
 \end{equation*}
 To that end, first note that we directly obtain that
 \begin{equation*}
  \max_{\chi\in X(D)}\min_{ u \in H^1_{cyl}(D)} J( u ,\chi)\leq \min_{ u \in H^1_{cyl}(D)}\max_{\chi\in X(D)} J( u ,\chi),
 \end{equation*}
 that is,
 \begin{equation*}
 J( u _{\mathrm{Mm}},\chi_{\mathrm{Mm}}) \leq J( u _{\mathrm{mM}},\chi_{\mathrm{mM}}).
 \end{equation*}
 We will need the closed convex hull of $X(D)$, namely
 \begin{equation*}
  \clconv X(D)=\bigg\{\rho\in BV(D):0\leq \rho\leq 1\text{ a.e. in } D,\ \int_D \rho\;d\mathbf x=|\Omega|\bigg\}.
 \end{equation*}
 Note that $\clconv X(D)$ is a weakly-$*$ closed and convex subset of $BV(D)$ and that all partial semi-continuity properties of $J$ still hold in it. Assume there exists $c\in\mathbb R$ such that
 \begin{equation}\label{contra}
  \max_{\chi\in X(D)}\min_{ u \in H^1_{cyl}(D)} J( u ,\chi)<c< \min_{ u \in H^1_{cyl}(D)}\max_{\chi\in X(D)} J( u ,\chi),
 \end{equation}
 and define the set-valued maps $F,G:H^1_{cyl}(D)\strongly 2^{\clconv X(D)}$ by
 \begin{equation*}
  F( u )\defeq \big\{\rho\in \clconv X(D):J( u ,\rho)<c\big\}\ \text{ and }\  G( u )\defeq \big\{\rho\in \clconv X(D):J( u ,\rho)>c\big\}.
 \end{equation*}
 Since $J( u ,\cdot)$ is weakly-$*$ upper semi-continuous we get that $F( u )$ is open for each $ u \in H^1_{cyl}(D)$. For each $ u \in H^1_{cyl}(D)$ the set $G( u )$ is not empty, due to \eqref{contra}, and convex: let $\rho_1,\rho_2\in\clconv X(D)$ such that $J( u ,\rho_1),J( u ,\rho_2)>c$ and $t\in [0,1]$ and calculate
 \begin{equation}\label{convex}
  J\big( u ,t\,\rho_1+(1-t)\,\rho_2\big)\geq t\,J( u ,\rho_1)+(1-t)\,J( u ,\rho_2)>t\,c+(1-t)\,c=c,
 \end{equation}
 since the functional $J( u ,\cdot)$ is concave. Moreover, $G^{-1}(\rho)=\big\{ u \in H^1_{cyl}(D):J( u ,\rho)>c\big\}$ is open for each $\rho\in\clconv X(D)$, since $J(\cdot,\rho)$ is lower semi-continuous, and $F^{-1}(\rho)=\big\{ u \in H^1_{cyl}(D):J( u ,\rho)<c\big\}$ is not empty (due to \eqref{contra}) and convex for every $\rho\in\clconv X(D)$, since $J(\cdot,\rho)$ is convex (argue just like \eqref{convex}). Thus, applying Proposition \ref{KKM} ($BV(D)$ is isomorphic to the dual of a separable Banach space--see for example \cite[Remark 3.12, p. 124]{AmbrosioFuscoEtAl2000}--and the weak-$*$ topology is always Hausdorff in the dual of a Banach space) to obtain $(\tilde u,\tilde \rho)\in H^1_{cyl}(D)\times \clconv X(D)$ such that $\tilde \rho\in F(\tilde u)\cap G(\tilde u)$, i.e., $c<J(\tilde u,\tilde\rho)<c$, a contradiction.

 Thus, Proposition \ref{minimaxEq} implies that the functional $J$ possesses a saddle point $( u_0,\chi_0)\in H^1_{cyl}(D)\times X(D)$, i.e.,
 \begin{equation}
  \label{PhiRhoSaddle} J( u_0,\chi)\leq J( u_0,\chi_0)\leq J( u ,\chi_0)\ \text{ for all }\ ( u ,\chi)\in H^1_{cyl}(D)\times X(D),
 \end{equation}
 given by $( u_0,\chi_0)=( u _{\mathrm{mM}},\chi_{\mathrm{Mm}})$.
 
 Finally we illustrate the non-triviality of the saddle point: Let $\varphi\in C^\infty_0(\Omega)$ satisfy $\varphi\geq0$ and set $\tilde u(x,y,z)\defeq z\, \varphi(x,y)$, so that $\tilde u\in H^1_{cyl}(D)$. It holds that
$$\int_D\tilde u_z\;d\mathbf x=\int_D\varphi\;d\mathbf x=2\int_\Omega\varphi\;dxdy>0.$$
 For any $\chi\in X(D)$ and $\varepsilon>0$ we obtain using \eqref{assumeM} that
\begin{align*}
 J(\varepsilon\,\tilde u,\chi) \leq {}&\varepsilon^2 \int_D\left(\frac{C_M\,\chi}{2}|\nabla \tilde u|^2+\frac{1-\chi}{2}|\nabla \tilde u|^2\right)d\mathbf x-\varepsilon\,\mu\int_D\tilde u_z \;d\mathbf x-J_2(\chi)\\
 = {}&\varepsilon^2 \int_D\left(\frac{C_M\,\chi}{2}|\nabla \tilde u|^2+\frac{1-\chi}{2}|\nabla \tilde u|^2\right)d\mathbf x-\varepsilon\,\mu\int_\Omega \varphi \;dxdy+J(0,\chi).
\end{align*}
Thus, for $\varepsilon$ small we get $J(\varepsilon\,\tilde u,\chi)<J(0,\chi)$, in particular for $\chi=\chi_0$. 
\end{proof}

\subsection{Qualitative properties of the optimal configuration}
Let $(u_0,\chi_0)$ be a saddle point of $\mathcal{E}$. Define the set that the ferrofluid occupies by
\begin{equation}\label{D_F}
 D_F:= \{x \in D\,|\,\chi_0(x)=1\}.
\end{equation}
For all $(u,\chi)\in H_{cyl}^1(D)\times X(D)$ holds
$$J(u_0,\chi)\leq J(u,\chi_0).$$
Moreover
$$J(u_0,\chi)\geq \frac{1}{2}\int_D |\nabla u_0|^2\,d\mathbf x-\mu\,\int_D(u_0)_z\,d\mathbf x -J_2(\chi)$$
and
$$J(u,\chi_0)\leq \frac{C_M}{2}\int_D  |\nabla u|^2\,d\mathbf x-\mu\,\int_D(u)_z\,d\mathbf x -J_2(\chi_0),$$
so that altogether
\begin{equation}
 \label{eq8}\frac{1}{2}\,\big(\lnorm u_0\rnorm_{cyl}^2-C_M\,\lnorm u\rnorm_{cyl}^2\big)-\mu\,\int_D \big((u_0)_z-u_z\big)\,d\mathbf x\leq J_2(\chi)-J_2(\chi_0).
\end{equation}
This enables us to prove an estimate on the norm of the optimal solution.
\begin{proposition}\label{u0norm}
 It holds that $\lnorm u_0\rnorm_{cyl}\leq 2\,\mu\,\sqrt{|D|}$.
\end{proposition}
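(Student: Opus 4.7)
The plan is to specialize inequality \eqref{eq8} to a convenient test pair $(u,\chi)$ in order to eliminate the right-hand side and the $C_M$-term, leaving a quadratic-in-$\|u_0\|_{cyl}$ inequality that forces the desired bound.

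Concretely, I would choose $\chi = \chi_0$ and $u = 0$ in \eqref{eq8}. With $\chi = \chi_0$ the right-hand side $J_2(\chi) - J_2(\chi_0)$ vanishes, and with $u = 0$ both $\|u\|_{cyl}$ and $u_z$ vanish. The inequality then collapses to
\[
 \tfrac{1}{2}\,\|u_0\|_{cyl}^2 \,\leq\, \mu \int_D (u_0)_z \, d\mathbf{x}.
\]
(Equivalently, one could skip \eqref{eq8} and argue directly from the saddle-point inequality $J(u_0,\chi_0) \leq J(0,\chi_0)$ together with the lower bound $M(s) \geq s^2/2$ from Assumption \ref{assume}; this gives the same estimate.)

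Next I would apply the auxiliary bound \eqref{phiz}, namely
\[
 \int_D (u_0)_z \, d\mathbf{x} \,\leq\, \sqrt{|D|}\,\|u_0\|_{cyl},
\]
which yields $\tfrac{1}{2}\|u_0\|_{cyl}^2 \leq \mu\sqrt{|D|}\,\|u_0\|_{cyl}$. Dividing by $\|u_0\|_{cyl}$ (the case $u_0 \equiv 0$ is trivial) gives $\|u_0\|_{cyl} \leq 2\mu\sqrt{|D|}$, as required.

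There is essentially no obstacle here: the work has already been done in establishing \eqref{eq8} and the Cauchy--Schwarz-type bound \eqref{phiz}. The only thing to watch is to pick the test functions in the direction that simultaneously kills the free $C_M$ factor (by setting $u=0$) and the $J_2$ contribution (by setting $\chi = \chi_0$), so that the coercive quadratic in $u_0$ is what survives.
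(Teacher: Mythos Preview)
Your proposal is correct and matches the paper's proof essentially verbatim: the paper also sets $u=0$ and $\chi=\chi_0$ in \eqref{eq8} to obtain $\tfrac{1}{2}\lnorm u_0\rnorm_{cyl}^2 \leq \mu\int_D (u_0)_z\,d\mathbf{x}$, and then applies \eqref{phiz} to conclude.
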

\begin{proof}
 For $u=0$ and $\chi=\chi_0$ in \eqref{eq8} we get that
 $$\frac{1}{2}\,\lnorm u_0\rnorm_{cyl}^2\leq \mu\,\int_D (u_0)_z\,d\mathbf x\leq \mu\,\sqrt{|D|}\,\lnorm u_0\rnorm_{cyl}.$$
 The last inequality is due to \eqref{phiz}.
\end{proof}
The gravity term in $J_2$ allows us to prove an estimate that justifies the physical intuition that a heavy ferrofluid (with $b$ large) will not float in the air.
\begin{proposition}\label{prop_bottom}
 Let $D_F$ be as in \eqref{D_F} and $\partial_{bot} D$, the bottom part of the boundary, as in \eqref{boundaries}. Then
 $$\dist(D_F,\partial_{bot}D)\leq \frac{\mu^2}{b}\left(1-\frac{1}{C_M}\right).$$ 
\end{proposition}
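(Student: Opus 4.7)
The plan is to compare $\chi_0$ against a downward-translated competitor and apply the saddle-point inequality \eqref{eq8} with a test function $u$ chosen to optimize the lower bound, rather than the naive $u=u_0$, which only produces a constant of order $C_M-1$ instead of the sharp $1-1/C_M$. Concretely, I would set $d\defeq\dist(D_F,\partial_{bot}D)$ and define $\chi_d(x,y,z)\defeq\chi_0(x,y,z+d)$; since $D_F\subset\Omega\times(-1+d,1)$, the shifted support lies in $D$, volume is preserved by translation, and $\chi_d\in X(D)$ is a legitimate competitor.

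The key step is to \emph{maximize} the left-hand side of \eqref{eq8} over $u\in H^1_{cyl}(D)$, which amounts to \emph{minimizing} the strictly convex quadratic $u\mapsto\frac{C_M}{2}\lnorm u\rnorm_{cyl}^2-\phi(u)$, where $\phi(u)\defeq\mu\int_D u_z\,d\mathbf x$ is a bounded linear functional with $\lnorm\phi\rnorm\leq\mu\sqrt{|D|}$ by \eqref{phiz}. Denoting by $v^*$ the Riesz representer of $\phi$ in $H^1_{cyl}(D)$, the minimum is attained at $v^*/C_M$ with value $-\lnorm v^*\rnorm_{cyl}^2/(2C_M)$. Combining this with Cauchy--Schwarz applied to $\phi(u_0)=\langle v^*,u_0\rangle_{cyl}$ and completing the square in $\lnorm u_0\rnorm_{cyl}$ produces the $\chi$-independent bound
\begin{equation*}
 J_2(\chi_0)-J_2(\chi)\leq\tfrac12\lnorm v^*\rnorm_{cyl}^2\,(1-1/C_M)\leq\mu^2|\Omega|\,(1-1/C_M)\quad\text{for every }\chi\in X(D),
\end{equation*}
where the last step uses $|D|=2|\Omega|$.

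It then remains to evaluate $J_2(\chi_0)-J_2(\chi_d)$ directly: the $p_0$-term cancels by volume preservation, the change of variables $z\mapsto z+d$ in the gravity integral yields a contribution of exactly $b\,d\,|\Omega|$, and the total variation is invariant under pure translation of the underlying set. Combining with the previous estimate gives $b\,d\,|\Omega|\leq\mu^2|\Omega|(1-1/C_M)$, and the bound follows.

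The hard part will be the last point: the identity $\int_D|\nabla\chi_d|=\int_D|\nabla\chi_0|$ strictly requires that $D_F$ not share positive $\mathcal H^2$-measure with $\partial_{top}D$, since otherwise the shift converts portions of $\partial D_F$ that previously lay on $\partial D$ (and hence did not count in the total variation inside $D$) into genuine interior boundary of $D_F-d\,\mathbf{e}_z$. The clean constant stated in the proposition corresponds precisely to this regularity, which one expects of a physical ferrofluid surface held away from the lid of the container by surface tension and gravity.
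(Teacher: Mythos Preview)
Your approach is essentially the same as the paper's: both use a downward-translated competitor for $\chi_0$ and plug it into the saddle inequality \eqref{eq8}. The difference lies only in how the test function $u$ is chosen. The paper restricts to the one-parameter family $u=\alpha u_0$ with $\alpha\in[0,C_M^{-1/2})$, completes the square in $\lnorm u_0\rnorm_{cyl}$ after a Cauchy--Schwarz step, and then optimizes over $\alpha$, finding $\alpha_*=C_M^{-1}$. You instead maximize the left-hand side of \eqref{eq8} over \emph{all} $u\in H^1_{cyl}(D)$ via the Riesz representer of the linear functional $u\mapsto\mu\int_D u_z$, and then complete the square in $u_0$. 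Both routes land on the identical constant $\mu^2|\Omega|(1-1/C_M)$; yours is slightly more abstract but avoids the somewhat ad~hoc ansatz $u=\alpha u_0$. A minor procedural difference is that the paper shifts by $\delta\in[0,d)$ and takes the supremum over $\delta$, whereas you shift by exactly $d$; this is harmless.

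Your closing paragraph correctly flags the one genuine subtlety: invariance of the relative perimeter under the downward shift requires that $D_F$ not meet $\partial_{top}D$ in positive $\mathcal H^2$-measure, since otherwise the shift brings boundary that was on $\partial D$ (and hence uncounted in $\int_D|\nabla\chi_0|$) into the interior of $D$, strictly increasing the perimeter term and breaking the inequality in the needed direction. The paper's proof has the same tacit assumption, asserting without further comment that ``a rigid motion of $D_F$ away from the boundary does not change its perimeter.'' So you have not introduced a new gap; you have merely been more explicit about one that is already present.
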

\begin{proof}
 If $d:=\dist(D_F,\partial_{bot}D)= 0$ there is nothing to prove. Suppose that $d >0$. For any $\delta \in [0,d)$, define $A_\delta\eqdef D_F-(0,0,\delta)$, and note that $\dist(A_\delta,\partial_{bot}D)>0$. Let $\alpha \in [0,C_M^{-\frac{1}{2}})$ (to be chosen appropriately) and set $u=\alpha\, u_0$ in inequality \eqref{eq8} to obtain
 {\allowdisplaybreaks\begin{align*}
  J_2(\chi)-J_2(\chi_0)\geq {}& \frac{1}{2} \big(1-C_M\,\alpha^2\big)\lnorm u_0\rnorm_{cyl}^2-\mu\,\big(1-\alpha\big)\int_D(u_0)_z\,d\mathbf x\\
  \geq {}& \frac{1}{2} \big(1-C_M\,\alpha^2\big)\lnorm u_0\rnorm_{cyl}^2-\mu\,\big(1-\alpha\big)\,\sqrt{|D|}\,\lnorm u_0\rnorm_{cyl}\\[1ex]
  \geq {}& -\frac{\mu^2\,|D|\,(1-\alpha)^2}{2(1-C_M\,\alpha^2)}
 \end{align*}}
 Setting $\chi=\chi_{A_\delta}$, the left-hand side becomes
 $$J_2(\chi_{A_\delta})-J_2(\chi_0)=b\int_D z\,(\chi_{A_\delta}-\chi_0)\,d\mathbf x,$$
 since a rigid motion of $D_F$ away from the boundary does not change its perimeter. It holds that
 $$\int_{D_F} z\,d\mathbf x=\int_{A_\delta} (z+\delta)\,d\mathbf x=\int_{A_\delta} z\,d\mathbf x+\delta\,|\Omega|,$$
 so that, altogether, we get
 $$-\frac{\mu^2\,(1-\alpha)^2}{(1-C_M\,\alpha^2)} \leq -b\,\delta,$$
 or, equivalently,
 \begin{equation*}
  \delta\leq \frac{\mu^2\,(1-\alpha)^2}{b(1-C_M\,\alpha^2)}. 
 \end{equation*}
 The function on the right hand side is minimized for $\alpha=\alpha_* \defeq C_M^{-1} < C_M^{-\frac{1}{2}}$. Choosing $\alpha=\alpha_*$ we obtain
 \begin{equation*}
  \label{eq4}\delta\leq \frac{\mu^2}{b}\left(1-\frac{1}{C_M}\right), 
 \end{equation*}
 and taking the supremum on all admissible $\delta$ we finish the proof.
%  \[ \dist(D_F,\partial_{bot}D) \leq \frac{\mu^2}{b}\left(1-\frac{1}{C_M}\right).\]
\end{proof}
\begin{remark}
The same argumentation provides with a proof that there will be no disconnected ferrofluid bubbles floating far from the rest of the ferrofluid mass. A complementary argument can be used to show that there cannot be any air bubbles in the ferrofluid too close to the interface.
\end{remark}
\begin{remark}
One could have chosen to eliminate the quadratic part of \eqref{u0norm} by choosing $u=C_M^{-1/2}\,u_0$ to obtain a bound directly. The argumentation in the proof above aims to illustrate that the result of Proposition \ref{prop_bottom} is optimal, in the sense that a different rescaling of the optimal solution will not provide with a better estimate. This, of course, does not mean that the proposition is optimal per se: We expect that an explicit relation between the parameters exists, that acts as a necessary and sufficient condition for asserting that $\dist(D_F,\partial_{bot}D)=0$. This follows physical intuition, since a light ferrofluid in a strong magnetic field will completely leave the bottom and stick to the upper part of the container. However, finding such a relation needs sharper density estimates for the optimal solution, whose extraction and manipulation lies beyond the scope of this paper.
\end{remark}

\subsection{Duality theory for linear magnetization laws}\label{duality}
In this section we focus in the linear case, that is, we assume that 
$$M(s)=\frac{\mu}{2}\, s^2$$
for a fixed $\mu>1$. We prove that the saddle point that we found is a minimizer of the energy functional of the system. In fact, we show that the energy functional is conjugate to our convex-concave functional $J$. The main impact of this section is that, after obtaining Theorem \ref{EquivThm}, we can apply regularity results that have been developed for minimizers of free discontinuity problems to the saddle point from Theorem \ref{ExThm}, in order to obtain Theorem \ref{LinRegularity} and Corollary \ref{LinRegularity1}.

Among the first works studying properties of optimal configurations that are minimizers of a corresponding energy we should mention those by Ambrosio and Buttazzo \cite{AmbrosioButtazzo1993} and by Lin \cite{Lin1993}. Apart from other technical results, Ambrosio and Buttazzo also proved H\"older continuity of the optimal solution and openness of the optimal set, whereas Lin worked in a space of currents. There has been a number of works following; Larsen \cite{Larsen2003} showed $C^1$ regularity away from the boundary for the components of the optimal set; Fusco and Julin \cite{FuscoJulin2015} dealt with the so-called \textit{Taylor cones} -- conical points on the free surface of a fluid inside an electric field -- and with refined regularity assertions on the minimizers; De Philippis and Figalli \cite{DePhilippisFigalli2015} studied the dimension of the set of singularities of the boundary of the optimal set; Carozza \textit{et al} \cite{CarozzaEtAl2014} deal also with energies with general potentials. Many other works on this subject can be found in the references in these citations; this list is by far not exhaustive.

The main result of this section is given in its end, following the necessary discussion on duality theory. We follow the notation and exposition of Ekeland-Temam \cite[Chapter III, 4]{EkelandTemam1999}.

Fix $\chi \in X(D)$ and define $V\defeq H^1_{cyl}(D)$ and $Y\defeq L^2(D,\mathbb R^3)\cong Y^*$. Moreover, define
\begin{equation}
 \label{NewJ}
  J_\chi(\nabla u)  \defeq J(u,\chi)= \int_D f_\chi(\nabla u)\; d\mathbf x-J_2(\chi),
\end{equation}
where $f_\chi:D\times\mathbb R^3\strongly \mathbb R$ defined by
\begin{equation}
 \label{integrant}
  f_\chi(\xi)\defeq \frac{\mu\,\chi}{2}\,|\xi |^2+\frac{1-\chi}{2}\,|\xi |^2-(0,0,\mu)\cdot \xi.
\end{equation}
Moreover, define the family of perturbations $\Phi_\chi:V\times Y\strongly \mathbb R$ by
\begin{equation*}
 \Phi_\chi(u,p)\defeq J_\chi(\nabla u-p).
\end{equation*}
In the previous section we have shown that there exists a unique solution $u_\chi\in V$ to the minimization problem
\begin{equation}
 \label{P} \min_{u\in V} J_\chi(\nabla u)=\min_{u\in V} \Phi_\chi(u,0).
\end{equation}
The dual problem is
\begin{equation}
 \label{P*pre} \sup_{p^*\in Y} \big\{-\Phi^*_\chi(0,p^*)\big\}
\end{equation}
and it holds that
\begin{align*}
 \Phi^*_\chi(0,p^*) {}& =\sup\bigg\{\int_D p^*\cdot p\;d\mathbf x-J_\chi(\nabla u- p):u\in V,p\in Y\bigg\}\\
 {}&= \sup\left\{\sup\bigg\{\int_D p^*\cdot p\;d\mathbf x-J_\chi(\nabla u- p):p\in Y\bigg\}:u\in V\right\}\\
 {}&= \sup\left\{\sup\bigg\{\int_Dp^*\cdot\nabla u\;d\mathbf x-\int_D p^*\cdot q\;d\mathbf x-J_\chi(q):\nabla u-q\in Y\bigg\}:u\in V\right\}\\
 {}&= \sup\left\{\sup\bigg\{\int_Dp^*\cdot\nabla u\;d\mathbf x-\int_D p^*\cdot q\;d\mathbf x-J_\chi(q):q\in Y\bigg\}:u\in V\right\}\\
 {}&=\left\{
 \begin{alignedat}{4}
  &\sup\bigg\{-\int_D p^*\cdot q\;d\mathbf x-J_\chi(q):q\in Y\bigg\},{}&& \text{ when } p^*\in Y_d,\\
  &+\infty, {}&& \text{ otherwise }
 \end{alignedat}
 \right.\\[0.5em]
 {}&=\left\{
 \begin{alignedat}{4}
  &J^*_\chi(-p^*),{}&& \text{ when } p^*\in Y_d,\\
  &+\infty, {}&& \text{ otherwise, }
 \end{alignedat}
 \right.
\end{align*}
where
$$Y_d\defeq \left\{p^*\in Y:\int_D p^*\cdot\nabla u\;d\mathbf x=0\text{ for all }u\in V\right\}.$$
Thus, \eqref{P*pre} is equivalent to
\begin{equation}
 \label{P*pre1} \sup_{p^*\in Y_d}\left\{-J^*_\chi(-p^*)\right\}=-\inf_{p^*\in Y_d} J^*_\chi(-p^*).
\end{equation}
Next, we calculate the conjugate functional with the help of \cite[Proposition 1.2, p.78]{EkelandTemam1999}
\begin{equation}
 \label{dual1} J^*_\chi(p^*)=\int_D f^*_\chi(p^*)\;d\mathbf x+J_2(\chi),
\end{equation}
since $J_2(\chi)$ is a constant and $f^*_\chi(p^*)$ is calculated pointwise in $D$, so that,
\begin{equation*}
\begin{aligned}
 f^*_\chi(p^*){}&=\left\{\
 \begin{aligned}
  \left(\xi\mapsto \frac{1}{2}|\xi|^2-(0,0,\mu)\cdot \xi\right)^*(p^*) &\quad \text{ for } \chi=0,\\
  \left(\xi\mapsto \frac{\mu}{2}|\xi|^2-(0,0,\mu)\cdot \xi\right)^*(p^*) &\quad \text{ for }\chi=1,
 \end{aligned}
 \right.
 \\[0.5em]
 {}&=\frac{\chi}{2\,\mu}\big|p^*+(0,0,\mu)\big|^2+\frac{1-\chi}{2}\big|p^*+(0,0,\mu)\big|^2,
 \end{aligned}
\end{equation*}
where we used classical properties of the convex conjugate (ex. \cite[Proposition 1.3.1, p.42]{HiriartUrrutyLemarechal1993}). Thus, the dual problem \eqref{P*pre1} becomes
\begin{equation}
 \label{P*} -\inf_{p^*\in Y_d} \tilde{\mathcal E}_\chi(p^*),
\end{equation}
where
\begin{equation}
 \label{FunctionalE} \tilde{\mathcal E}_\chi(p^*)\defeq \int_D\left\{\frac{\chi}{2\,\mu}\,\big|p^*-(0,0,\mu)\big|^2+\frac{1-\chi}{2}\,\big|p^*-(0,0,\mu)\big|^2\right\}d\mathbf x+J_2(\chi).
\end{equation}
Next, we calculate the derivative of the perturbations
\begin{align*}
 &\langle \Phi_\chi'(u,p),(v,q)\rangle\\
 &=\int_D\Big\{\chi\,\mu\,(\nabla u-p)\cdot (\nabla v-q)+(1-\chi)\,(\nabla u-p)\cdot(\nabla v-q)-(0,0,\mu)\cdot (\nabla v-q)\Big\}\;d\mathbf x,
\end{align*}
so that the differential satisfies
\begin{equation*}
\mathrm d \Phi_\chi(u,p)=\left(
\begin{aligned}
 \chi\,\mu\,(\nabla u-p)+(1-\chi)\,(\nabla u-p)-(0,0,\mu)\\
 -\chi\,\mu\,(\nabla u-p)-(1-\chi)\,(\nabla u-p)+(0,0,\mu)
\end{aligned}
\right)^\top\in Y\times Y\subset V^*\times Y.
\end{equation*}
According to \cite[Proposition 5.1, p.21]{EkelandTemam1999},
\begin{equation*}
 \Phi_\chi(u,0)+\Phi^*_\chi(0,p^*)=0
\end{equation*}
is equivalent to
\begin{equation}\label{subdiff}
 (0,p^*)\in \partial\Phi_\chi(u,0),
\end{equation}
the latter denoting the subdifferential of $\Phi_\chi$. But $\Phi_\chi$ is differentiable, so that $\partial\Phi_\chi (u,0)=\{\mathrm d\Phi_\chi (u,0)\}$. Thus, \eqref{subdiff} is equivalent to
\begin{alignat}{3}
 0{}&=\chi\,\mu\,\nabla u+(1-\chi)\,\nabla u-(0,0,\mu) &\quad \text{(in the sense of distributions)} \label{eq1}\\
 p^*{}&=-\chi\,\mu\,\nabla u-(1-\chi)\,\nabla u+(0,0,\mu), &\label{eq2}
\end{alignat}
where \eqref{eq1} translates to
\begin{equation}
 \label{eq3}\int_D\Big(\chi\,\mu\,\nabla u+(1-\chi)\,\nabla u-(0,0,\mu)\Big)\cdot\nabla v\;d\mathbf x=0\text{ for all } v\in V.
\end{equation}
A solution to the equation \eqref{eq3} is $u=u_\chi$, the minimizer of the primal problem \eqref{P}. Set 
$$p_\chi^*\defeq -\chi\,\mu\,\nabla u_\chi-(1-\chi)\,\nabla u_\chi+(0,0,\mu).$$
Then, from \cite[Proposition 2.4, p.53]{EkelandTemam1999} we get that $p_\chi^*$ is a solution to the dual problem \eqref{P*}. Define $\mathcal E_\chi:Y\strongly\mathbb R$ by
\begin{equation}
 \label{energy}
 \mathcal E_\chi(q)\defeq \int_D\left(\frac{\chi\,\mu}{2}\,|q|^2+\frac{1-\chi}{2}\,|q|^2\right)d\mathbf x+J_2(\chi).
\end{equation}
We have the following:
\begin{lemma}
 \label{dual-new} The function $u_\chi$ satisfies
 \begin{equation}\label{eq10}
  \mathcal E_\chi(\nabla u_\chi)=\min\left\{\mathcal E_\chi(\nabla u): u\in V_\chi\right\}=\min_{p^*\in Y_d}\tilde{\mathcal E}_\chi(p^*),
 \end{equation}
 where the space $V_\chi$ is defined by
\begin{equation}
 \label{Space}V_\chi\defeq \left\{v\in V:\int_D \big(-\chi\,\mu\,\nabla v-(1-\chi)\,\nabla v+(0,0,\mu)\big)\cdot\nabla u\;d\mathbf x=0\text{ for all } u\in V\right\}.
\end{equation}
In particular, it holds
\begin{equation}
  \mathcal E_\chi(\nabla u_\chi)=\min\left\{\mathcal E_\chi(\nabla u): u\in V_\chi\text{ and } u =u_\chi\text{ on }\partial D\right\}.
 \end{equation}
\end{lemma}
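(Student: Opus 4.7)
The plan is to deduce the lemma from the strong duality already established in the excerpt, combined with one algebraic identity that holds precisely because $\chi\in\{0,1\}$ a.e. Specifically, I will show that the primal energy $\mathcal E_\chi(\nabla v)$ and the dual functional $\tilde{\mathcal E}_\chi$ coincide when the latter is composed with a natural affine map $v\mapsto T_\chi v$ from $V$ to $Y$, and then chain this identity with the already known minimality of $p^*_\chi$ in the dual problem.

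First I would introduce the affine map $T_\chi v\defeq -\chi\,\mu\,\nabla v-(1-\chi)\,\nabla v+(0,0,\mu)$ and record two things: by the very definition \eqref{Space}, one has $v\in V_\chi$ if and only if $T_\chi v\in Y_d$; and equation \eqref{eq3} states exactly that $u_\chi\in V_\chi$, with $T_\chi u_\chi=p^*_\chi$. Next I would prove the pointwise identity $\tilde{\mathcal E}_\chi(T_\chi v)=\mathcal E_\chi(\nabla v)$ for every $v\in V$ by direct computation: on the set $\{\chi=1\}$ the relevant integrand of $\tilde{\mathcal E}_\chi$ evaluated at $T_\chi v-(0,0,\mu)=-(\chi\mu+(1-\chi))\nabla v$ equals $\tfrac{1}{2\mu}\mu^2|\nabla v|^2=\tfrac{\mu}{2}|\nabla v|^2$, while on $\{\chi=0\}$ it equals $\tfrac{1}{2}|\nabla v|^2$; both match the integrand of $\mathcal E_\chi(\nabla v)$, and the constant $J_2(\chi)$ is the same in both functionals. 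The entire step rests on $\chi^2=\chi$ a.e. With this identity in hand, for any $v\in V_\chi$ one has $T_\chi v\in Y_d$, whence
\[
\mathcal E_\chi(\nabla v)=\tilde{\mathcal E}_\chi(T_\chi v)\;\geq\;\min_{p^*\in Y_d}\tilde{\mathcal E}_\chi(p^*)=\tilde{\mathcal E}_\chi(p^*_\chi)=\mathcal E_\chi(\nabla u_\chi),
\]
where the middle equality uses that $p^*_\chi$ solves the dual problem \eqref{P*} (already established in the excerpt via \cite[Prop. 2.4]{EkelandTemam1999}), and the last equality is the pointwise identity applied to $v=u_\chi$. Since $u_\chi\in V_\chi$, the infimum is attained at $u_\chi$, proving both equalities in \eqref{eq10}. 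The ``in particular'' addendum is immediate: the set $\{v\in V_\chi:v=u_\chi\text{ on }\partial D\}$ is a subset of $V_\chi$ that still contains $u_\chi$, so the minimum over this smaller set is again attained there.

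I do not anticipate a genuine obstacle: the lemma is essentially bookkeeping of the Fenchel--Rockafellar duality built up in the previous subsection, and the key collapse $\tilde{\mathcal E}_\chi\circ T_\chi=\mathcal E_\chi\circ\nabla$ is a one-line consequence of $\chi$ being a characteristic function. The only thing that will need care is keeping the sign conventions straight, in particular the minus sign in the definition of $p^*_\chi$ and the substitution $p^*\mapsto -p^*$ taking \eqref{P*pre} to \eqref{P*pre1}, which together ensure that $T_\chi v-(0,0,\mu)$ is what one should substitute into $\tilde{\mathcal E}_\chi$ for the identity to work.
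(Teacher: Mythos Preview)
Your proposal is correct and takes essentially the same approach as the paper's proof. Both arguments rest on the identity $\tilde{\mathcal E}_\chi\big(-\chi\mu\nabla v-(1-\chi)\nabla v+(0,0,\mu)\big)=\mathcal E_\chi(\nabla v)$ (which uses $\chi^2=\chi$), the already-established fact that $p^*_\chi$ minimizes $\tilde{\mathcal E}_\chi$ over $Y_d$, and the observation $u_\chi\in V_\chi$; the paper simply routes the change of variables through an intermediate minimization over $q\in Y$, whereas you apply the identity directly at the level of $v\in V_\chi$.
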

\begin{proof}
First note that
\begin{align*}
 {}&\min_{p^*\in Y_d}\tilde{\mathcal E}_\chi(p^*) = \tilde{\mathcal E}_\chi(p_\chi^*)\\
 {}&\quad=\int_D\left(\frac{\chi}{2\,\mu}\,\big|\chi\,\mu\,\nabla u_\chi+(1-\chi)\,\nabla u_\chi\big|^2+\frac{1-\chi}{2}\,\big|\chi\,\mu\,\nabla u_\chi+(1-\chi)\,\nabla u_\chi\big|^2\right)d\mathbf x+J_2(\chi)\\
 {}&\quad =\int_D\left(\frac{\chi\,\mu}{2}\,|\nabla u_\chi|^2+\frac{1-\chi}{2}\,|\nabla u_\chi|^2\right)d\mathbf x+J_2(\chi).
\end{align*}
Moreover
\begin{align*}
\mathcal E_\chi(\nabla u_\chi)={}&\min\Big\{\tilde{\mathcal E}_\chi(p^*):p^*\in Y_d\Big\}\\
={}&\min\bigg\{\tilde{\mathcal E}_\chi(p^*):p^*\in Y\text{ and }\int_D p^*\cdot\nabla u\;d\mathbf x=0\text{ for all }u\in V\bigg\}\\
={}&\min\bigg\{\tilde{\mathcal E}_\chi\big(-\chi\,\mu\,q-(1-\chi)\,q+(0,0,\mu)\big):q\in Y\text{ and}\\
{}&\qquad \int_D \big(-\chi\,\mu\,q-(1-\chi)\,q+(0,0,\mu)\big)\cdot\nabla u\;d\mathbf x=0\text{ for all } u\in V\bigg\}\\
={}&\min\bigg\{\mathcal E_\chi(q):q\in Y\text{ and}\\
{}&\qquad \int_D \big(-\chi\,\mu\,q-(1-\chi)\,q+(0,0,\mu)\big)\cdot\nabla u\;d\mathbf x=0\text{ for all } u\in V\bigg\}\\
\leq{}&\inf\bigg\{\mathcal E_\chi(\nabla v):v\in V_\chi\bigg\}.
\end{align*}
Since from its definition $u_\chi\in V_\chi$, we obtain that
$$\mathcal E_\chi(\nabla u_\chi)=\min_{u\in V_\chi}\mathcal E_\chi(\nabla u).$$
% Since the non-constant summand of $\mathcal E_\chi$ is nonnegative we get that
% $$u_\chi=\argmin\bigg\{\frac{3}{2}\int_D\left\{\chi\,\mu\,|\nabla v|^2+(1-\chi)\,|\nabla v|^2\right\}d\mathbf x+J_2(\chi):v\in V_\chi\bigg\}.$$
% Adding a zero results in
% $$u_\chi=\argmin\bigg\{\mathcal E_\chi(\nabla v)+\int_D(0,0,\mu)\cdot \nabla v\;d\mathbf x:v\in V_\chi\bigg\}.$$
% Since
% \begin{equation}\label{eq7}
%  \int_D (0,0,\mu)\cdot \nabla v\;d\mathbf x = \int_D v_z\;d\mathbf x = \int_\Omega \left(v|_{z=1}-v|_{z=-1}\right)dxdy,
% \end{equation}
Since the minimization problem does not change when we consider it in the class of functions that satisfy the ``correct'' (i.e., $u=u_\chi$ on $\partial D$) boundary condition, we obtain
$$u_\chi=\argmin\bigg\{\mathcal E_\chi(\nabla v):v\in V_\chi\text{ and }v=u_\chi\text{ on }\partial D\bigg\}.$$
\end{proof}
We can now prove the following:
\begin{theorem}\label{EquivThm}
 Define the energy functional
 $$\mathcal E(u,\chi)\defeq \mathcal E_\chi(\nabla u)-\int_D(0,0,\mu)\cdot\nabla u\;d\mathbf x,$$
 where $\mathcal E_\chi$ is given by \eqref{energy}, and let $( u _0,\chi_0)\in H^1_{cyl}(D)\times  X (D)$ satisfy
 \begin{equation*}
  J( u _0,\chi_0)=\min_{ u \in H^1_{cyl}(D)}\max_{\chi\in  X (D)} J( u ,\chi).
 \end{equation*}
 Then $( u _0,\chi_0)$ satisfies
 \begin{equation*}
  \mathcal E(u_0,\chi_0)=\min\bigg\{\mathcal E(u,\chi):(u,\chi)\in H^1_{cyl}(D)\times  X (D)\text{ with } u=u_0\text{ on }\partial D\bigg\}.
 \end{equation*}
\end{theorem}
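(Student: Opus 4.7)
My plan is to combine the two saddle--point inequalities $J(u_0,\chi)\le J(u_0,\chi_0)\le J(u,\chi_0)$ with the primal--dual identity of Lemma \ref{dual-new}. A crucial preliminary observation is that the functional $\ell(u):=\int_D(0,0,\mu)\cdot\nabla u\,d\mathbf x=\mu\int_\Omega[u(x,y,1)-u(x,y,-1)]\,dx\,dy$ depends only on the traces of $u$ on $\partial_{top}D\cup\partial_{bot}D$, so for every admissible $u$ (i.e.\ $u=u_0$ on $\partial D$) one has $\ell(u)=\ell(u_0)$. This reduces the theorem to showing $\mathcal E_\chi(\nabla u)\ge\mathcal E_{\chi_0}(\nabla u_0)$ for every admissible $(u,\chi)$. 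The case $\chi=\chi_0$ is immediate: $\mathcal E(\cdot,\chi_0)$ and $J(\cdot,\chi_0)$ differ by the constant $2J_2(\chi_0)$, so the right--hand saddle inequality $J(u,\chi_0)\ge J(u_0,\chi_0)$ transfers directly to $\mathcal E(u,\chi_0)\ge\mathcal E(u_0,\chi_0)$.

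For a general $\chi\in X(D)$ I would fix $\chi$ and first minimize $\mathcal E_\chi(\nabla\cdot)$ over the admissible set. The minimizer is the weak solution $\bar u_\chi=u_0+w_\chi$ (with $w_\chi\in H^1_0(D)$) of the Dirichlet problem $\mathrm{div}(\alpha_\chi\nabla\bar u_\chi)=0$, where $\alpha_\chi:=1+(\mu-1)\chi$; in particular $\bar u_{\chi_0}=u_0$ because $u_0$ satisfies the associated Euler--Lagrange equation against all test functions in $H^1_0(D)$. Setting $a_\chi(v,w):=\int_D\alpha_\chi\nabla v\cdot\nabla w\,d\mathbf x$ and using the Euler--Lagrange identity $a_\chi(\bar u_\chi,w_\chi)=0$, a short computation yields
\[
\mathcal E_\chi(\nabla\bar u_\chi)-\mathcal E_{\chi_0}(\nabla u_0)=\tfrac{\mu-1}{2}\int_D(\chi-\chi_0)|\nabla u_0|^2\,d\mathbf x-\tfrac{1}{2}a_\chi(w_\chi,w_\chi)+J_2(\chi)-J_2(\chi_0).
\]
The left--hand saddle inequality rearranges to $J_2(\chi)-J_2(\chi_0)\ge\tfrac{\mu-1}{2}\int_D(\chi-\chi_0)|\nabla u_0|^2\,d\mathbf x$, and substituting reduces the proof to a sharp bound on the correction $\tfrac{1}{2}a_\chi(w_\chi,w_\chi)$.

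The main obstacle, in my view, is exactly this correction estimate, and it is where Lemma \ref{dual-new} is indispensable. The function $w_\chi$ is defined only implicitly through the Dirichlet problem, but the lemma identifies $\bar u_\chi$ as the primal minimizer of a convex variational problem whose dual optimum is the divergence--free vector field $p^*_\chi:=-\alpha_\chi\nabla\bar u_\chi+(0,0,\mu)\in Y_d$. Exploiting this dual characterization---which effectively realises $w_\chi$ as the $a_\chi$--orthogonal projection onto $H^1_0(D)$ of a source proportional to $(\mu-1)(\chi-\chi_0)\nabla u_0$---one obtains the required control on $a_\chi(w_\chi,w_\chi)$, allowing the correction to be absorbed into the gain already provided by the saddle inequality. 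Once this estimate is in place, the chain $\mathcal E(u,\chi)\ge\mathcal E(\bar u_\chi,\chi)\ge\mathcal E(u_0,\chi_0)$ holds for every admissible pair and closes the argument.
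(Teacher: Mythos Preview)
Your reduction is correct up to the displayed identity for $\mathcal E_\chi(\nabla\bar u_\chi)-\mathcal E_{\chi_0}(\nabla u_0)$, but the argument stops precisely where the difficulty lies. After substituting the left saddle inequality you are left needing $\tfrac{1}{2}a_\chi(w_\chi,w_\chi)$ to be dominated by the (unknown, possibly zero) slack in $J_2(\chi)-J_2(\chi_0)\ge \tfrac{\mu-1}{2}\int_D(\chi-\chi_0)|\nabla u_0|^2$. Since the integral on the right can be negative while $a_\chi(w_\chi,w_\chi)\ge 0$, no estimate on $a_\chi(w_\chi,w_\chi)$ in terms of $(\chi-\chi_0)\nabla u_0$ alone can close the gap; the substitution has already discarded the information you would need. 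Your appeal to Lemma~\ref{dual-new} does not help here: that lemma, and the dual field $p^*_\chi$, pertain to the paper's $u_\chi=\argmin_{H^1_{cyl}}J(\cdot,\chi)$ (a Neumann-type problem), not to your $\bar u_\chi$ (a Dirichlet problem with data $u_0$), and you give no link between the two objects.

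The paper's route avoids this correction term altogether. It uses the duality \emph{identity} $\min_{u\in H^1_{cyl}}J(u,\chi)=-\mathcal E_\chi(\nabla u_\chi)$ (in the quadratic case this is just the elementary relation $\min_v[\tfrac{1}{2}a_\chi(v,v)-\ell(v)]=-\tfrac{1}{2}a_\chi(u_\chi,u_\chi)$), so that the max--min property of $\chi_0$ translates directly into $\mathcal E_{\chi_0}(\nabla u_0)=\min_{\chi}\mathcal E_\chi(\nabla u_\chi)$. In other words, one must exploit the saddle structure as an equality between optimal values (primal and dual), not merely as the one-sided inequality $J(u_0,\chi)\le J(u_0,\chi_0)$ at the fixed function $u_0$; your decomposition via $\bar u_\chi$ and the inequality form of the saddle property is not enough.
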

\begin{proof}
 From the discussion above we get that for $\chi\in X(D)$, the dual problem \eqref{P*} is equivalent to the problem
 \begin{equation}
  \max\bigg\{-\mathcal E_\chi(\nabla v):v\in V_\chi\text{ and } v =u_\chi\text{ on }\partial D\bigg\},
 \end{equation}
 where $V_\chi$ is defined in \eqref{Space}. From \cite[Proposition 2.4, p.53]{EkelandTemam1999} we get that
 \begin{equation}\label{eq5}
  \max\bigg\{-\mathcal E_\chi(\nabla v):v\in V_\chi\text{ and } v =u_\chi\text{ on }\partial D\bigg\}=\min\bigg\{J( v,\chi):v\in H^1_{cyl}(D)\bigg\}
 \end{equation}
 and, from Lemma \ref{dual-new}, that
 \begin{equation}\label{eq6}
  u_\chi=\argmin\bigg\{\mathcal E_\chi(\nabla v):v\in V_\chi\text{ and } v =u_\chi\text{ on }\partial D\bigg\}
 \end{equation}
 for all $\chi\in X(D)$. Since the function
 $$\chi\mapsto \min\bigg\{J( v,\chi):v\in H^1_{cyl}(D)\bigg\}$$
 (minimizers are unique so the mapping is well-defined as a real function) is maximized for $\chi=\chi_0$, we get that $\chi_0$ maximizes
 \begin{align*}
  \chi\mapsto {}& \max\bigg\{-\mathcal E_\chi(\nabla v):v\in V_\chi\text{ and } v =u_\chi\text{ on }\partial D\bigg\}\\
  {}& =-\min\bigg\{\mathcal E_\chi(\nabla v):v\in V_\chi\text{ and } v =u_\chi\text{ on }\partial D\bigg\}\\
  {}& =-\mathcal E_\chi(u_\chi),
 \end{align*}
 where, the last equality is due to \eqref{eq6}. Using equation \eqref{eq6} again, we get that
 $$\mathcal E_{\chi_0}(\nabla u_0)=\min\bigg\{\mathcal E_\chi(\nabla u):(u,\chi)\in V_0\times  X (D)\text{ with } u=u_0\text{ on }\partial D\bigg\},$$
 where $V_0\defeq V_{\chi_0}$. Because of 
 \begin{equation*}
 \int_D (0,0,\mu)\cdot \nabla u_0\;d\mathbf x = \int_D (u_0)_z\;d\mathbf x = \int_\Omega \left(u_0|_{z=1}-u_0|_{z=-1}\right)dxdy,
\end{equation*}
we can add the missing term on both sides to obtain
 \begin{align*}
  \mathcal E_{\chi_0}(u_0){}&-\int_D(0,0,\mu)\cdot\nabla u_0\;d\mathbf x\\
  {}&=\min\bigg\{\mathcal E_\chi(\nabla u)-\int_D(0,0,\mu)\cdot\nabla u_0\;d\mathbf x:(u,\chi)\in V_0\times  X (D)\text{ with } u=u_0\text{ on }\partial D\bigg\}\\
  {}&=\min\bigg\{\mathcal E_\chi(\nabla u)-\int_D(0,0,\mu)\cdot\nabla u\;d\mathbf x:(u,\chi)\in V_0\times  X (D)\text{ with } u=u_0\text{ on }\partial D\bigg\},
 \end{align*}
 that is,
 \begin{equation}\label{eq9}
 \mathcal E(u_0,\chi_0)=\min\bigg\{\mathcal E(u,\chi):(u,\chi)\in V_0\times  X (D)\text{ with } u=u_0\text{ on }\partial D\bigg\}.
 \end{equation}
 In order to finish the proof, note that the minimizer of $\mathcal E$ in $H^1_{cyl}(D)\times  X (D)$ belongs to $V_0\times  X (D)$, since the side condition in $V_0$ is nothing else than the partial Euler-Lagrange equation of $\mathcal E$.
\end{proof}
% \begin{remark}
%  Using a Lagrange multiplier in order to drop the side constraint that appears in $V_\chi$ (or $Y_d$) in \eqref{eq10} takes us back to the original saddle-point formulation for $J$. This is the reason for inserting the boundary condition $u=u_0$ on $\partial D$ and add the missing boundary term as a constant.
% \end{remark}

Since $(u_0,\chi_0)$ minimizes an energy functional, it is possible to apply the theory developed in the references to obtain regularity results. More precisely, we have the following proposition.

\begin{theorem}\label{LinRegularity}
Let $(u_0,\chi_0)$ be a minimizer of $\mathcal{E}$ and $D_F$ be given by \eqref{D_F}, that is, the set occupied by the ferrofluid. Then $u \in C^{0,\frac{1}{2}}_{loc}(D)$, and $\partial D_F\cap D$ is locally a $C^{1,\alpha}$-submanifold of $\mathbb R^3$ for some $\alpha \in( 0,1)$, that is, up to a relatively closed singular set $\Sigma$ which satisfies $\mathcal{H}^{2}(\Sigma)=0$. Here $\mathcal H^2$ denotes the $2$-dimensional Hausdorff measure in $\mathbb R^3$, restricted on $\partial D_F\cap D$.
\end{theorem}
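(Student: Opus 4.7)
The plan is to interpret $(u_0,\chi_0)$ as a minimizer of a two-phase free-discontinuity problem and invoke the regularity theory developed in the references listed at the start of Section \ref{duality}. By Theorem \ref{EquivThm}, $(u_0,\chi_0)$ minimizes
\[
 \mathcal E(u,\chi) = \int_D \tfrac{1}{2}\,a_\chi\,|\nabla u|^2\,d\mathbf x - \mu\int_D u_z\,d\mathbf x + b\int_D z\,\chi\,d\mathbf x + p_0\int_D\chi\,d\mathbf x + \tau\int_D|\nabla\chi|
\]
over $(u,\chi)\in H^1_{cyl}(D)\times X(D)$ with $u=u_0$ on $\partial D$, where the coefficient $a_\chi\defeq \mu\,\chi+(1-\chi)\in[1,\mu]$ is uniformly elliptic. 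Modulo the incompressibility constraint $\int_D\chi\,d\mathbf x=|\Omega|$ and the linear perturbation $-\mu\int_D u_z$, this is precisely the class of functionals covered by \cite{AmbrosioButtazzo1993}.

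The first step is the H\"older continuity of $u_0$. I would freeze $\chi_0$ and, in a ball $B_r(\mathbf x_0)\Subset D$, compare $u_0$ with the $a_{\chi_0}$-harmonic replacement having the same trace on $\partial B_r$. Using the global minimality of $(u_0,\chi_0)$, together with the fact that the volume constraint in $X(D)$ can be restored away from $\mathbf x_0$ at no extra cost, one obtains an energy decay of the form $\int_{B_\rho}|\nabla u_0|^2\leq C\,(\rho/r)^3\int_{B_r}|\nabla u_0|^2+C\,r^{2}$ for $\rho\leq r$, where the $r^2$ term comes from the perimeter penalty $\tau\int|\nabla\chi_0|$ scaling like $r^{n-1}=r^2$. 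Iterating this and invoking Campanato's characterization yields $u_0\in C^{0,1/2}_{loc}(D)$, with the exponent $\tfrac12$ emerging from the matching of the two scales, exactly as in \cite[Theorem 4.7]{AmbrosioButtazzo1993} and \cite{Larsen2003}.

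The second and main step is to establish an almost-minimality property for the perimeter term: there exist $C,r_0>0$ such that, for every competitor $\tilde\chi\in X(D)$ with $\{\tilde\chi\neq\chi_0\}\Subset B_r(\mathbf x_0)\Subset D$ and $r\leq r_0$,
\[
 \tau\int_{B_r}|\nabla\chi_0|\leq \tau\int_{B_r}|\nabla\tilde\chi|+C\,r^{2+\beta}
\]
for some $\beta>0$. To derive this, I would use $(u_{\tilde\chi},\tilde\chi)$ as competitor in $\mathcal E$, where $u_{\tilde\chi}$ minimizes $u\mapsto\mathcal E(u,\tilde\chi)$ with boundary data $u_0$. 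The difference of the Dirichlet parts is controlled by the uniform ellipticity of $a_\chi$ together with the Caccioppoli-type estimate $\int_{B_r}|\nabla u_0|^2\lesssim r^{2+\beta}$, which follows from the $C^{0,1/2}$ bound of the previous step; the bulk terms contribute $O(r^3)$ and are of lower order. Once this quasi-minimality is in hand, the classical regularity theory of Tamanini and De Giorgi, applied as in \cite[Section 5]{AmbrosioButtazzo1993} and refined by \cite{DePhilippisFigalli2015}, implies that the reduced boundary $\partial^* D_F\cap D$ is locally a $C^{1,\alpha}$-submanifold of $\mathbb R^3$ and that $\Sigma\defeq(\partial D_F\setminus\partial^* D_F)\cap D$ satisfies $\mathcal H^2(\Sigma)=0$. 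The principal obstacle is to incorporate the global incompressibility constraint into the quasi-minimality argument: one must show that the volume condition can be restored by a volume-preserving modification of $\tilde\chi$ supported outside $B_r$ whose perimeter and bulk cost is of lower order in $r$, which is a standard but technical reshuffling lemma in the free boundary literature.
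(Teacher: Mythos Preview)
The paper's proof is a one-line citation: it invokes \cite[Theorems 1.1 and 1.2]{LinKohn1999} directly, identifying $F(x,u,p)=\tfrac12|p|^2$ and $G(x,u,p)=\tfrac{\mu-1}{2}|p|^2+bz+p_0$. Your proposal instead attempts to reconstruct the regularity theory via the two-step scheme (first H\"older continuity of $u_0$ \`a la \cite{AmbrosioButtazzo1993}, then almost-minimality of the perimeter and Tamanini's $C^{1,\alpha}$ theory). The first step is fine and does yield $u_0\in C^{0,1/2}_{loc}(D)$.

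The second step, however, contains a genuine gap. You claim that the $C^{0,1/2}$ bound implies $\int_{B_r}|\nabla u_0|^2\lesssim r^{2+\beta}$ for some $\beta>0$. This is not correct: in dimension $n=3$, Morrey--Campanato gives only $\int_{B_r}|\nabla u_0|^2\lesssim r^{n-1}=r^2$ from $C^{0,1/2}$, with no extra exponent. Consequently your almost-minimality inequality reads $\tau\int_{B_r}|\nabla\chi_0|\leq\tau\int_{B_r}|\nabla\tilde\chi|+Cr^{n-1}$, which is the \emph{borderline} case and does not feed into Tamanini's theorem (that requires an error of order $r^{n-1+2\alpha}$ for some $\alpha>0$). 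This is precisely why the decoupled strategy fails and why \cite{LinKohn1999} (and \cite{Lin1993}) run a \emph{joint} excess-decay argument in which the Dirichlet energy and the perimeter excess are controlled simultaneously and improve each other through a bootstrap; separating the two steps as you do loses exactly the margin needed. Your citation of \cite[Section 5]{AmbrosioButtazzo1993} for the $C^{1,\alpha}$ conclusion is also off: that section establishes openness of the optimal set, not $C^{1,\alpha}$ regularity of its boundary. To repair the argument you would either have to reproduce the Lin--Kohn combined excess decay, or first prove a strictly better H\"older exponent for $u_0$ near flat points of $\partial D_F$, which again amounts to the coupled analysis.
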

\begin{proof}
Because of Theorem \ref{EquivThm}, The regularity results of \cite[Theorem 1.1]{LinKohn1999} and \cite[Theorem 1.2]{LinKohn1999} apply and provide the claim. In the notation of that paper, we need to set $F(x,u,p)=\frac{1}{2}|p|^2$, and $G(x,u,p)=\frac{\mu-1}{2} |p|^2 + bz + p_0$ (note that $\mu>1$ by assumption).
\end{proof}
As a direct consequence of the above we obtain that the optimal set is equivalent to a relatively open set.
\begin{corollary}\label{LinRegularity1}
 Let $(u_0,\chi_0)$ be a minimizer of $\mathcal{E}$ and $D_F$ as in Theorem \ref{LinRegularity}. Let $\tilde \chi$ be the characteristic function of the set $D_F\setminus(\partial D_F\cap D)$. Then $\mathcal E(u_0,\tilde \chi)\leq \mathcal E(u_0,\chi_0)$.
\end{corollary}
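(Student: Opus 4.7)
The plan is to show that $\tilde\chi$ and $\chi_0$ represent the very same element of $BV(D)$, from which $\mathcal E(u_0,\tilde\chi)=\mathcal E(u_0,\chi_0)$, and hence the claimed inequality, will follow immediately. The whole argument amounts to recognising that passing from $\chi_0$ to $\tilde\chi$ is just a change of Lebesgue-representative of the same characteristic function.

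First, I would invoke Theorem \ref{LinRegularity}: the set $\partial D_F\cap D$ decomposes into a relatively closed singular part $\Sigma$ with $\mathcal H^2(\Sigma)=0$ and a complement that is locally a $C^{1,\alpha}$-submanifold of $\mathbb R^3$ of dimension two. In particular, $\partial D_F\cap D$ has zero three-dimensional Lebesgue measure. Since the symmetric difference between $D_F$ and $D_F\setminus(\partial D_F\cap D)$ is contained in $\partial D_F\cap D$, this gives $\tilde\chi=\chi_0$ almost everywhere in $D$. In particular $\int_D\tilde\chi\,d\mathbf x=\int_D\chi_0\,d\mathbf x=|\Omega|$, so $\tilde\chi\in X(D)$, and $\tilde\chi$ and $\chi_0$ coincide as elements of $L^1(D)$.

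Next, I would use that the total variation seminorm on $BV(D)$ depends only on the $L^1$-equivalence class of its argument, so $\int_D|\nabla\tilde\chi|=\int_D|\nabla\chi_0|$. All remaining terms appearing in $\mathcal E(u_0,\cdot)$ are Lebesgue integrals against $\chi$ (the bulk magnetic contribution, the gravity term $\int_D bz\,\chi\,d\mathbf x$ and the pressure term $\int_D p_0\,\chi\,d\mathbf x$), hence are also invariant under modification of $\chi_0$ on a Lebesgue-null set. Adding everything up yields $\mathcal E(u_0,\tilde\chi)=\mathcal E(u_0,\chi_0)$, which a fortiori implies the stated inequality.

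There is no real obstacle here: the only point to keep in mind is that the equality $\tilde\chi=\chi_0$ is pointwise $a.e.$ rather than everywhere, which is precisely the reason the statement is phrased as ``the optimal set is \emph{equivalent} to a relatively open set''. If one wishes, one can additionally verify that $D_F\setminus(\partial D_F\cap D)$ coincides with the topological interior of $D_F$ inside $D$, and is therefore genuinely relatively open in $D$, thereby providing the canonical ``nice'' representative of the optimal configuration that is alluded to in the text preceding the corollary.
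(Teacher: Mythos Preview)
Your proposal is correct and follows essentially the same route as the paper: both invoke Theorem~\ref{LinRegularity} to conclude that $\partial D_F\cap D$ has zero three-dimensional Lebesgue measure, whence $\tilde\chi=\chi_0$ almost everywhere and $\int_D|\nabla\tilde\chi|=\int_D|\nabla\chi_0|$. Your write-up is in fact somewhat more explicit than the paper's, since you spell out that the remaining bulk terms in $\mathcal E$ are Lebesgue integrals against $\chi$ and hence also unchanged, and you observe that $\tilde\chi\in X(D)$.
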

\begin{proof}
 Theorem \ref{LinRegularity} implies that $\mathcal H^3(\partial D_F\cap D)=0$ since the set $(\partial D_F\cap D)\setminus \Sigma$ is a $2$-dimensional submanifold. So we get from the definition of the perimeter that $\int_D|\nabla \tilde \chi|= \int_D|\nabla\chi_0|$ which implies the corollary.
\end{proof}

Thus in the linear case one can obtain a solution as a minimizer instead of a saddle point. That, in turn, allows for the application of the deep theory which was developed in the references listed in the beginning of Section \ref{duality} for minimizers of free discontinuity problems.

% \bibliographystyle{plain}
% \bibliography{references}
\input{Parini_Stylianou_2017_arXiv.bbl.references}
\end{document}